\newtheorem{theorem}{Theorem}[section]
\newtheorem{definition}[theorem]{Definition}
\newtheorem{lemma}[theorem]{Lemma}
\newtheorem{proposition}[theorem]{Proposition}
\newtheorem{remark}[theorem]{Remark}
\title{Stable finiteness properties of infinite discrete groups}
\author{No\'{e} B\'{a}rcenas}
\address{Centro de Ciencias Matem{\'a}ticas, UNAM, Campus Morelia, Michoac{\'a}n, Mexico }
\email{barcenas@matmor.unam.mx}
\author{Dieter Degrijse}
\address{Department of Mathematical Sciences, Univeristy of Copenhagen, Denmark}
\email{d.degrijse@math.ku.dk}
\author{Irakli Patchkoria}
\address{Department of Mathematical Sciences, Univeristy of Copenhagen, Denmark}
\email{irakli.p@math.ku.dk}
\newcommand{\mF}{\mathcal {F}}
\newcommand{\Z}{\mathbb Z}
\newcommand{\orb}{\mathcal{O}_{\mF}G}
\newcommand{\orbmod}{\mbox{Mod-}\mathcal{O}_{\mF}G}
\newcommand{\zmod}{\mathbb{Z}\mbox{-Mod}}
\newcommand{\Gspec}{\mathrm{Sp}_{G}}
\newcommand{\Hspec}{\mathrm{Sp}_{H}}
\newcommand{\Kspec}{\mathrm{Sp}_{K}}
\newcommand{\colim}{\mathrm{colim} }
\newcommand{\ul}{\underline }
\begin{document}

\begin{abstract}
Let $G$ be an infinite discrete group. A classifying space for proper actions of $G$ is a proper $G$-CW-complex $X$ such that the fixed point sets $X^H$ are contractible for all finite subgroups $H$ of $G$. In this paper we consider the stable analogue of the classifying space for proper actions in the category of proper $G$-spectra and study its finiteness properties. We investigate when $G$ admits a stable classifying space for proper actions that is finite or of finite type and relate these conditions to the compactness of the sphere spectrum in the homotopy category of proper $G$-spectra and to classical finiteness properties of the Weyl groups of finite subgroups of $G$. Finally, if the  group $G$ is virtually torsion-free we also show that the smallest possible dimension of a stable classifying space for proper actions coincides with the virtual cohomological dimension of $G$, thus providing the first geometric interpretation of the virtual cohomological dimension of a group. 
\end{abstract}
\maketitle
\section{Introduction}
Let $G$ be an infinite discrete group and let $\mathcal{F}$ be the family of finite subgroups of $G$. Recall that a classifying space for proper actions of $G$ is a proper $G$-CW complex $X$ such that the fixed point sets $X^H$ are contractible for every finite subgroup $H$ of $G$. Such a space is also called a model for $\underline{E}G$. 
These spaces appear naturally in geometric group theory and algebraic topology, and are important tools for studying groups (e.g. see \cite{Luck2}). Classifying spaces for proper actions also have important $K$-theoretical applications. They appear for example on left hand side of assembly map conjectures (e.g.~see \cite{LuckReich}) and in a generalization of the Atiyah-Segal completion theorem (see \cite{LuckOliver}). With these applications and others in mind it is important to have models for $\underline{E}G$ with good geometric finiteness properties. There has therefore been a longstanding interest in finiteness properties of classifying spaces for proper actions, e.g.~see \cite{BradyLearyNucinkis}, \cite{KrophollerMislin}, \cite{LearyNucinkis}, \cite{Luck1}, \cite{nucinkis00} to list just a few references.

One can give an interpretation of $\underline{E}G$ in terms of homotopical algebra. There is a model category structure on the category of pointed $G$-spaces, where the weak equivalences are those maps which induce weak homotopy equivalences on fixed points for all finite subgroups of $G$ (see \cite[Section 2.3]{Fau}). This model category is known as the \emph{unstable proper $G$-equivariant homotopy theory}. By adding a disjoint $G$-fixed basepoint to a model for $\underline{E}G$, one obtains a cofibrant replacement of $S^{0}$ in the latter model structure. One can think of this cofibrant replacement as a proper cellular decomposition of $S^{0}$ up to homotopy. In this paper we consider the analogue of this in the \emph{stable model category of proper $G$-spectra} $\Gspec$ and define a stable model for $\underline{E}G$ to be a certain proper cellular decomposition of the sphere spectrum $S^0$ in $\Gspec$ (see below for the precise definition). Here, a $G$-spectrum is just an orthogonal spectrum equipped with a $G$-action and the stable model category of proper $G$-spectra $\Gspec$ and its associated homotopy category $\mathrm{Ho}(\Gspec)$ are defined in \cite{Fau}. The weak equivalences in $\Gspec$ are morphisms of $G$-spectra which induce stable equivalences on derived fixed points for all finite subgroups. In Section 3 we will briefly recall the definition of a $G$-spectrum and discuss some of the properties of $\Gspec$ and $\mathrm{Ho}(\Gspec)$ that will be needed in this paper. Most of these are contained in \cite[Section 6]{Fau}. 

The forthcoming paper \cite{DHLPS} provides an alternative model structure on $\Gspec$ which is Quillen equivalent to the one given in \cite{Fau}. The weak equivalences in both model structures are the same. However, the model structure of \cite{DHLPS} has much better control of fibrations compared to that of \cite{Fau} which is obtained by an abstract Bousfield localization procedure. The paper \cite{DHLPS} also conducts a detailed study of the proper $G$-equivariant stable homotopy category $\mathrm{Ho}(\Gspec)$ and provides deloopings with respect to equivariant vector bundles. Furthermore, several equivariant cohomology theories, such as Borel cohomology, Bredon cohomology, equivariant $K$-theory and equivariant stable cohomotopy are shown to be represented in $\mathrm{Ho}(\Gspec)$ (see \cite{DHLPS}). This tells us that the category $\mathrm{Ho}(\Gspec)$ contains interesting information about the group $G$ and understanding its properties is therefore useful. For example, if $G$ is an infinite discrete group, then the sphere spectrum $S^0 \in \mathrm{Ho}(\Gspec)$ is in general not a compact object. Hence, it makes sense to study the finiteness properties of $S^0 \in \mathrm{Ho}(\Gspec)$, i.e.~ \emph{the stable finiteness properties of $G$}. The purpose of this paper is to conduct such a study.

\begin{definition}  \rm A \emph{stable model for} $\underline{E}G$ consists of a collection of $G$-spectra $\{X^n\}_{n\in \mathbb{Z}}$ together with a collection of morphisms $ X^{n-1}\rightarrow  X^n$ in $\mathrm{Ho}(\Gspec)$, where $ X^{n}=\{\ast\}$ if $n<0$ while for each $n\geq 0$ there exists a stable cofiber sequence
\[      X^{n-1} \rightarrow  X^{n} \rightarrow   \bigvee_{i \in I_n} \Sigma^n G/H_{i \ +} \rightarrow \Sigma  X^{n-1}   \]
such that $H_i \in \mathcal{F}$ for all $i \in I_n$, and 
\[ \mathrm{hocolim}_n  X^n \cong S^0\]
in $\mathrm{Ho}(\Gspec)$. We will often refer to a specific model for $\mathrm{hocolim}_n  X^n$ as a stable model for $\underline{E}G$, keeping in mind that the specific $G$-spectra $X^n$ and morphisms $ X^{n-1}\rightarrow  X^n$ are part of the data. If there exists a $d$ such that $X^{n-1}\xrightarrow{\cong} X^{n}$ for all $n\geq d+1$, then the stable model for $\underline{E}G$ is called \emph{finite dimensional}. In this case, the smallest such $d$ is called the \emph{dimension} of the stable model for $\underline{E}G$. A stable model for $\underline{E}G$ is said to be of \emph{finite type} if the sets $I_n$ are finite for all $n\geq 0$. A stable model for $\underline{E}G$ is a said to be \emph{finite} if it is both finite dimensional and of finite type. 
\end{definition}

Recall that the geometric dimension for proper actions $\underline{\mathrm{gd}}(G)$ of $G$ is the smallest possible dimension that a model for $\underline{E}G$ can have. This geometric invariant coincides with its algebraic counterpart, called the Bredon cohomological dimension $\underline{\mathrm{cd}}(G)$, except for the possibility that $\underline{\mathrm{cd}}(G)=2$ but $\underline{\mathrm{gd}}(G)=3$ (see Section 4). We define \emph{the stable geometric dimension for proper actions of $G$}, denoted by $\underline{\mathrm{gd}}_{\mathrm{st}}(G)$, to be the smallest possible dimension that a stable model for $\underline{E}G$ can have. One of the main results of this paper (see Theorem \ref{th: main theorem}) says that the geometric invariant $\underline{\mathrm{gd}}_{\mathrm{st}}(G)$ equals the Mackey cohomological dimension $ \underline{\mathrm{cd}}_{\mathcal{M}}(G) $ of $G$. This algebraic invariant was introduced in \cite{MartinezNucinkis06} using Mackey functors for infinite discrete groups and shown to be equal to the virtual cohomological dimension of $G$ when $G$ is virtually torsion-free. We will recall some basics about the category of $G$-Mackey functors $\mathrm{Mack}_{\mathcal{F}}G$ in Section 2.
\begin{theorem}For any discrete group $G$, one has
	\[    \underline{\mathrm{cd}}_{\mathcal{M}}(G)  =  \underline{\mathrm{gd}}_{\mathrm{st}}(G).    \]
	In particular, if $G$ is virtually torsion-free then
	$   \mathrm{vcd}(G) =  \underline{\mathrm{gd}}_{\mathrm{st}}(G)$.    
\end{theorem}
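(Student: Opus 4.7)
The plan is to set up a dictionary between finite-dimensional stable models for $\underline{E}G$ and finite-length projective resolutions in the category $\mathrm{Mack}_{\mF}G$ of $G$-Mackey functors, by passing through the Mackey-valued equivariant homotopy groups of proper $G$-spectra. The key inputs I would use are: (i) the functor $\underline{\pi}_n(-)$ sending a proper $G$-spectrum to its Mackey functor of stable homotopy; (ii) the identification $\underline{\pi}_0(G/H_+) \cong \underline{A}_H$, the free Mackey functor at $H \in \mF$, so that wedges of shifted orbit spectra represent free (hence projective) Mackey functors; (iii) the fact that cofiber sequences in $\mathrm{Ho}(\Gspec)$ induce long exact sequences in $\mathrm{Mack}_{\mF}G$; and (iv) the identification of $\underline{\pi}_0(S^0)$ with the Burnside Mackey functor $\underline{A}$, whose projective dimension in $\mathrm{Mack}_{\mF}G$ is $\underline{\mathrm{cd}}_{\mathcal{M}}(G)$ by the definition of Mart\'{\i}nez--Nucinkis.

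For the inequality $\underline{\mathrm{gd}}_{\mathrm{st}}(G) \leq \underline{\mathrm{cd}}_{\mathcal{M}}(G)$, I would start with a projective resolution
\[ 0 \to P_d \to P_{d-1} \to \cdots \to P_0 \to \underline{A} \to 0 \]
of length $d := \underline{\mathrm{cd}}_{\mathcal{M}}(G)$, with each $P_n = \bigoplus_{i \in I_n} \underline{A}_{H_i}$ free on finite subgroups. Build $X^n$ inductively: set $X^{-1} = \ast$, and given $X^{n-1}$ with $\underline{\pi}_{n-1}(X^{n-1}) \cong \mathrm{im}(P_n \to P_{n-1})$ and higher Mackey homotopy vanishing, choose an attaching map $\alpha_n : \bigvee_{i \in I_n} \Sigma^{n-1} G/H_{i\,+} \to X^{n-1}$ whose effect on $\underline{\pi}_{n-1}$ is the boundary map $P_n \to P_{n-1}$. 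Set $X^n := \mathrm{cone}(\alpha_n)$. At each stage the long exact sequence of the cofiber sequence combined with the exactness of the resolution forces $\underline{\pi}_k(X^n) = 0$ for $0 < k \leq n$ and $\underline{\pi}_0(X^n) \cong \underline{A}$ for $n \geq d$, so by a Whitehead-type argument in $\mathrm{Ho}(\Gspec)$ the natural map $X^d \to S^0$ is an equivalence. Conversely, for $\underline{\mathrm{cd}}_{\mathcal{M}}(G) \leq \underline{\mathrm{gd}}_{\mathrm{st}}(G)$, take any finite-dimensional stable model $\{X^n\}$ of dimension $d$, apply $\underline{\pi}_*$ to the defining cofiber sequences, and splice the resulting long exact sequences into a complex
\[ 0 \to \underline{\pi}_d\Bigl(\bigvee_{I_d} \Sigma^d G/H_{+}\Bigr) \to \cdots \to \underline{\pi}_0\Bigl(\bigvee_{I_0} G/H_{+}\Bigr) \to \underline{A} \to 0 \]
of projective Mackey functors. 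Exactness is verified by induction using that $\underline{\pi}_k(X^{n-1}) = 0$ for $k \geq n$, giving a length-$d$ projective resolution of $\underline{A}$.

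The main technical obstacle, I expect, is the realization step of the upper bound: lifting the boundary map between free Mackey functors to an honest map of proper $G$-spectra. This hinges on a faithful computation of $[\bigvee_i \Sigma^{n-1} G/H_{i\,+}, X^{n-1}]^G$ purely in terms of the Mackey functor data of $X^{n-1}$, i.e.~a proper-equivariant analogue of the identification $[G/H_+, X]^G \cong \pi_0^H(X)$ together with the additivity of orbit spectra in the first variable. The requisite properties of mapping spectra in $\mathrm{Ho}(\Gspec)$ should be extracted from the analysis of proper $G$-spectra in \cite{Fau} and \cite{DHLPS}. Once the main equality is established, the second assertion is immediate: when $G$ is virtually torsion-free, \cite{MartinezNucinkis06} identifies $\underline{\mathrm{cd}}_{\mathcal{M}}(G)$ with $\mathrm{vcd}(G)$, so $\underline{\mathrm{gd}}_{\mathrm{st}}(G) = \mathrm{vcd}(G)$.
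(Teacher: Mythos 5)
Your overall architecture --- extract a complex of free Mackey functors from the skeletal filtration of a stable model via $\pi^{-}_{n}(X^n/X^{n-1})$, and conversely realize a free resolution of $\underline{A}$ by attaching stable cells --- is exactly the paper's, and your reduction of the second assertion to Mart\'{\i}nez-P\'{e}rez--Nucinkis is correct. But there is a genuine gap at the heart of both directions: you treat proper $G$-spectra as if they were chain complexes of Mackey functors, i.e.\ as if $\bigvee_i \Sigma^n G/H_{i\,+}$ had Mackey homotopy concentrated in degree $n$. This is false: $\pi_k^K(\Sigma^n G/H_+) \cong \bigoplus_{[g] \in K\setminus G/H}\pi_{k-n}^{K\cap {}^gH}(S^0)$, and the equivariant stable stems are nonzero in many positive degrees (already $\pi_1^e(S^0)=\mathbb{Z}/2$). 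So your claimed vanishing $\pi^{-}_{k}(X^{n-1})=0$ for $k\geq n$, on which your exactness argument for the extracted complex rests, fails already for $X^0$; likewise the ``higher Mackey homotopy vanishing'' in your inductive hypothesis cannot be arranged. Exactness of the complex $\pi^{-}_{n}(X^n/X^{n-1})$ is really a statement about Bredon \emph{homology}: the paper smashes the filtration with the Eilenberg--MacLane spectrum $HA$ of the Burnside Mackey functor and invokes Lewis's equivariant stable Hurewicz theorem to identify the complex with the cellular chain complex computing $\mathrm{H}^K_{\ast}(S^0)\cong\pi^K_{\ast}(HA)$, which is concentrated in degree $0$. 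Some such linearization is unavoidable.

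The same issue undermines your realization step and the concluding ``Whitehead-type argument''. To extend a partial comparison map $f\colon X^{n-1}\to S^0$ over a newly attached cell you need $f\circ\alpha_n$ to vanish in $[\bigvee_i\Sigma^{n-1}G/H_{i\,+},S^0]^G\cong\bigoplus_i\pi^{H_i}_{n-1}(S^0)$, and these groups are nonzero stable stems not controlled by the Mackey-functor data; your proposal never actually produces the map $X^d\to S^0$. The paper avoids this by running the construction in the opposite order: it starts from the honest map $\Sigma^{\infty}\underline{E}G_+\to S^0$, truncates at the $(m-1)$-skeleton (after an Eilenberg-swindle modification making $\ker d_{m-1}$ free rather than merely projective --- a point your ``with each $P_n$ free'' elides), and manufactures the top attaching map $\omega$ as the desuspension of a composite through the cofiber $Y^m$ of $f$, so that $f\circ\omega=0$ for formal triangulated-category reasons and the octahedral axiom hands you the extension $\tilde f\colon X^m\to S^0$. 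That $\tilde f$ is an equivalence is then again checked on the Bredon homology of its cofiber via the Hurewicz theorem (the cofiber being connective), not by comparing homotopy Mackey functors degree by degree. If you reorganize your argument around the cofiber of the comparison map and the stable Hurewicz theorem, the rest of your outline goes through.
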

\noindent The virtual cohomological dimension $\mathrm{vcd}(G)$ of a virtually torsion-free group $G$ is by definition the cohomological dimension of any finite index torsion-free subgroup of $G$. It is a classical algebraic invariant that satisfies $\mathrm{vcd}(G)\leq \underline{\mathrm{cd}}(G)$. But since this inequality can be strict, the theorem above provides the first known geometric interpretation of the virtual cohomological dimension of a virtually torsion-free group. \\

We also investigate when $G$ admits a stable model for $\underline{E}G$ of finite type (see Theorem \ref{th: finite type}, Theorem \ref{prop: compact} and Theorem \ref{th: finite}) and relate this condition to the compactness of the sphere spectrum in $\mathrm{Ho}(\Gspec)$ and to classical finiteness properties of the Weyl-groups $W_G(H)=N_G(H)/H$ of finite subgroups $H$ of $G$.
\begin{theorem}\label{th: intro2} For any countable group $G$, the following are equivalent.
\begin{itemize}
\item[-] The sphere spectrum $S^{0}$ is a compact object of  $\mathrm{Ho}(\Gspec)$.
\item[-] There exists a finite dimensional stable model for $\underline{E}G$ and there exists a finite type stable model for $\underline{E}G$.
\item[-]  There exists a finite length resolution of the Burnside ring functor $\underline{A}$ in $\mathrm{Mack}_{\mathcal{F}}G$ consisting of finitely generated projective modules.
\item[-]  The Mackey cohomological dimension $ \underline{\mathrm{cd}}_{\mathcal{M}}(G)$ is finite, there are only finitely many conjugacy classes of finite subgroups in $G$, and for every finite subgroup $H$ of $G$ there exists a resolution of $\mathbb{Z}$ in $\mathbb{Z}[W_G(H)]\mbox{-mod}$ consisting of finitely generated projective modules. 
\end{itemize}
	
\end{theorem}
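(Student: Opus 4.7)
The plan is to set up a dictionary between stable models for $\underline{E}G$ and projective resolutions of the Burnside-ring Mackey functor $\underline{A} \in \mathrm{Mack}_{\mathcal{F}}(G)$, and then shuttle between the four conditions through this dictionary, making use of the main theorem $\underline{\mathrm{cd}}_{\mathcal{M}}(G) = \underline{\mathrm{gd}}_{\mathrm{st}}(G)$. First I would show that every stable model $\{X^n\}$ produces a projective resolution of $\underline{A}$. Applying the Mackey-functor-valued homotopy functor $\underline{\pi}_0^G$, which sends the orbit spectra $G/H_+$ to the free generators $\underline{A}[G/H]$ of $\mathrm{Mack}_{\mathcal{F}}(G)$, to the defining cofiber sequences
\[ X^{n-1} \to X^n \to \bigvee_{i\in I_n} \Sigma^n G/H_{i \ +} \to \Sigma X^{n-1} \]
and splicing the resulting long exact sequences, one obtains a chain complex $P_\bullet \to \underline{A}$ with $P_n = \bigoplus_{i\in I_n} \underline{A}[G/H_i]$. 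Exactness follows because $\mathrm{hocolim}_n X^n \simeq S^0$ and $\underline{\pi}_*^G(S^0)$ is $\underline{A}$ in degree zero. In the reverse direction, starting from such a projective resolution I would build a stable cell tower realizing it inductively: having produced $X^{n-1}$ so that its Mackey homotopy matches the truncated resolution, the differential $P_n \to P_{n-1}$ lifts to a stable map $\bigvee \Sigma^{n-1}G/H_{i \ +} \to X^{n-1}$ via the identification of morphism groups in $\mathrm{Ho}(\Gspec)$ with Mackey-functor Hom groups, and $X^n$ is defined as its cofiber. Under this correspondence, dimension of the stable model matches length of the resolution, and the cardinalities $|I_n|$ match the ranks of the projective terms $P_n$.

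With this dictionary, the equivalence of (2) and (3) reduces to a standard truncation. A finite-dimensional stable model yields a resolution of $\underline{A}$ of finite length, so $\mathrm{pd}_{\mathrm{Mack}}(\underline{A}) \le d := \underline{\mathrm{cd}}_{\mathcal{M}}(G)$. A finite-type stable model yields a (possibly infinite) resolution by finitely generated projectives. Truncating the latter at position $d$, the $d$-th syzygy is a kernel of a surjection between finitely generated projectives, hence finitely generated, and by Schanuel's lemma combined with the finite projective dimension it is moreover projective. This produces a finite resolution of $\underline{A}$ by finitely generated projectives, i.e.~(3); conversely, realizing such a resolution via the dictionary yields one finite stable model witnessing both halves of (2). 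The equivalence of (3) and (4) is purely algebraic. A finitely generated projective Mackey functor is a direct summand of a finite direct sum of free generators $\underline{A}[G/H]$, so a finitely generated projective resolution of $\underline{A}$ involves only finitely many conjugacy classes of finite subgroups. Evaluating such a resolution at $G/H$ and using the identification between the fixed-point data at $G/H$ and $\mathbb{Z}[W_G(H)]$-modules, under which $\underline{A}(G/H) = \mathbb{Z}$ with trivial $W_G(H)$-action, produces a finitely generated projective resolution of $\mathbb{Z}$ over $\mathbb{Z}[W_G(H)]$. Conversely, the conditions in (4) let one assemble f.g.~projective Mackey functors stage by stage (using finitely many conjugacy classes), with termination guaranteed by $\underline{\mathrm{cd}}_{\mathcal{M}}(G) < \infty$.

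For (1) $\Leftrightarrow$ (2) I would invoke that $\mathrm{Ho}(\Gspec)$ is a compactly generated triangulated category with compact generators $\{G/H_+ : H \in \mathcal{F}\}$, so its compact objects coincide with the thick subcategory these generate. If (2) holds, then via the equivalence with (3) already established $\underline{A}$ admits a finite f.g.\ projective resolution, which the dictionary realizes as a finite stable model for $\underline{E}G$; this is built from finitely many compact generators by finitely many cofibers, hence $S^0$ is compact, giving (1). Conversely, if $S^0$ is compact, then the identity map $S^0 \to S^0 \simeq \mathrm{hocolim}_n X^n$ through any stable model factors through some finite stage $X^N$, exhibiting $S^0$ as a retract of a finite-dimensional cell object. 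Together with the fact that compactness of $S^0$ forces the Mackey-functor attaching data at each stage to be finitely generated (since $\mathrm{Hom}(S^0, -)$ commutes with coproducts and thereby restricts the cellular structure to finite type at each level), one extracts a finite stable model and hence (2). The main obstacle lies in the realization half of the dictionary: one must verify that mapping groups in $\mathrm{Ho}(\Gspec)$ out of cell spectra $\bigvee \Sigma^n G/H_{i \ +}$ are faithfully computed by Mackey-functor Hom and Ext groups, so that algebraic attaching data genuinely lift to geometry. This is precisely where the countability hypothesis on $G$ enters, since it ensures that the set of cells stays set-theoretically controlled and that the cell-by-cell homotopy colimits are well-defined in $\mathrm{Ho}(\Gspec)$.
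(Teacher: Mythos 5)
Your overall framework --- a dictionary between stable models for $\underline{E}G$ and free resolutions of $\underline{A}$ in $\mathrm{Mack}_{\mathcal{F}}G$, combined with Schanuel-type truncation arguments --- is the same as the paper's (Sections 4.1--4.2 together with Theorems \ref{th: finite type} and \ref{prop: compact}). However, there are two genuine gaps. The first is in your passage from the third condition back to the second: you claim that realizing a finite length resolution by finitely generated \emph{projectives} "yields one finite stable model witnessing both halves." The dictionary only realizes \emph{free} resolutions, since the cells of a stable model contribute free Mackey functors $\bigoplus_{i\in I_n}\mathbb{Z}^G[-,H_i]$, and a finite length f.g.\ projective resolution does not obviously convert into a finite length f.g.\ free one: the top syzygy is f.g.\ projective but need not be stably free (a Wall-type finiteness obstruction; compare Theorem \ref{th: finite}, which shows that a finite stable model is equivalent to a finite length f.g.\ \emph{free} resolution, a priori a stronger condition). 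The paper sidesteps this by producing the two models separately: the finite-type model comes from the possibly infinite f.g.\ free resolution guaranteed by the $FP_\infty$ property, while the finite-dimensional model comes from $\underline{\mathrm{gd}}_{\mathrm{st}}(G)=\underline{\mathrm{cd}}_{\mathcal{M}}(G)<\infty$ (whose proof handles non-free projective syzygies by an Eilenberg swindle, at the cost of infinitely many top-dimensional cells).

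The second gap is in the compactness direction. The assertion that compactness of $S^0$ "forces the Mackey-functor attaching data at each stage to be finitely generated" is precisely the point requiring proof, and it does not follow formally from $[S^0,-]^G$ commuting with coproducts. The paper's argument uses Eilenberg--MacLane spectra to translate compactness into the statement that $\mathrm{Ext}^n_{\mathcal{M}_{\mathcal{F}}G}(\underline{A},-)$ commutes with countable direct sums, and then invokes a refined Bieri--Eckmann criterion (Proposition \ref{thm: bieri-eckmann}) valid for countable modules over the countable category $\mathcal{M}_{\mathcal{F}}G$. This is where the countability of $G$ actually enters --- not, as you suggest, in keeping the set of cells or the homotopy colimits under set-theoretic control. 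Finally, your sketch of the equivalence with the Weyl-group condition evaluates the resolution at $G/H$ and asserts $\underline{A}(G/H)=\mathbb{Z}$; in fact $\underline{A}(G/H)=A(H)$ is the Burnside ring, and evaluation at $G/H$ does not directly produce a $\mathbb{Z}[W_G(H)]$-resolution of $\mathbb{Z}$. The correct passage goes through the orbit category (the paper cites St.~John-Green for the Mackey-to-orbit comparison and Kropholler--Mart\'inez-P\'erez--Nucinkis for the orbit-to-Weyl-group comparison).
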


In Section \ref{sec: suspension} we show that the suspension $\Sigma^{\infty}X_{+}$ of a proper $G$-CW-complex $X$ gives rise to a stable model for $\underline{E}G$ if and only if $X^H$ is acyclic for every $H \in \mathcal{F}$. This indicates that the class of stable models for $\underline{E}G$ contains more than just the suspensions of certain proper $G$-CW-complexes and hence truly is a richer class. This fact is illustrated in the last section of the paper where we consider an example given by Leary and Petrosyan in \cite{LP} of a group $G$ that does not admit a $2$-dimensional contractible proper $G$-CW-complex but satisfies $\mathrm{vcd}(G)=2$ and $\underline{\mathrm{cd}}(G)=\underline{\mathrm{gd}}(G)=3$. For this group we explicitly construct a $2$-dimensional stable model for $\underline{E}G$. The reader will see that the construction crucially involves the attaching of equivariant cells via transfer maps that are unavailable in the unstable setting. \\

\noindent \textbf{Ackknowledgements.} We would like to thank Markus Hausmann, Wolfgang L\"{u}ck and Stefan Schwede for useful conversations. The first author was supported by DGAPA-PAPIIT Grant IA100315. The second and third author were supported by the Danish National Research Foundation through the Centre for Symmetry and Deformation (DNRF92) .

\section{Bredon modules and Mackey functors}
Throughout this section, let $G$ be a discrete group and let $\mathcal{F}$ be the family of finite subgroups of $G$. 
Let us begin by recalling some basics concerning modules over the orbit category and Bredon cohomology. This cohomology theory was introduced by Bredon in \cite{Bredon} for finite groups 
as a means to develop an obstruction theory for equivariant extensions of maps. It was later generalized to arbitrary groups by L\"{u}ck with applications to finiteness conditions  (see \cite[Section 9]{Luck} and \cite{LuckMeintrup}). We refer the reader to \cite[Section 9]{Luck} for more details. 

The \emph{orbit category} $\orb$ is the category with $G$-sets $G/H$, $H\in\mathcal{F}$, as objects and $G$-maps as morphisms. Note that the set of morphisms $\mathrm{Mor}(G/H,G/K)$ can be identified with the fixed point set $(G/K)^H$ and that a $G$-map $G/H \rightarrow G/K : H \mapsto xK$ will be denoted by $G/H \xrightarrow{x} G/K$. A right $\orb$-module is a contravariant functor \[M: \orb \rightarrow \zmod.\] The right $\orb$-modules are the objects of an abelian category $\orbmod$, whose morphisms are natural transformations. The abelian group of morphisms  between two objects $M,N\in\orbmod$ is denoted by $\mathrm{Hom}_{\orb}(M,N)$. Similarly, one defines the category of 
left $\orb$-modules whose objects are covariant functors from the $\orb$ to abelian groups.

The category $\orbmod$ has free objects; more precisely, its free objects are isomorphic to direct sums of basic free modules, which are of the form
$$\Z[-,G/K]$$
where $K\in\mathcal{F}$ and $\Z[G/H,G/K]$ is the free $\Z$-module with basis the set of $G$-maps from $G/H$ to $G/K$. A free module is finitely generated if it is isomorphic to a finite direct sum of basic free modules. The module $\Z[-,G/K]$ is free in the sense that there is a Yoneda lemma which states that for any $M \in \orbmod$, one has a natural isomorphism
\[  \mathrm{Hom}_{\orb}(\Z[-,G/K],M) \xrightarrow{\cong} M(G/K): \varphi \mapsto \varphi(G/K)(\mathrm{Id}_{G/K}).       \]
Similarly, one also has a natural isomorphism
\begin{equation}\label{eq: tensor with free} \Z[-,G/K] \otimes_{\orb} N \xrightarrow{\cong} N(G/K): \varphi\otimes m \mapsto N(\varphi)(m)       \end{equation}
for any $K \in \mathcal{F}$ and any left $\orb$-module $N$. Here  $-\otimes_{\orb}-$ denotes the categorical tensor product over the orbit category. The analogous statements for covariant free modules of course also hold true.

A sequence of modules in $\orbmod$ is said to be exact if it is exact when evaluated at every object. A right $\orb$-module $P$ is called projective if the functor 
\[ \mathrm{Hom}_{\orb}(P,-): \orbmod \rightarrow \zmod: M \mapsto  \mathrm{Hom}_{\orb}(P,M) \]
is exact. By the Yoneda lemma, free modules are projective and each module admits a free (projective) resolution. An $\orb$-module $M$ is said to be finitely generated if there exists a surjection of a finitely generated free module onto $M$.

The $n$-th Bredon cohomology of $G$ with coefficients in a right $\orb$-module $M$ is denoted by $\mathrm{H}^n_{\mathcal{F}}(G,M)$
and is defined as the $n$-th cohomology of the cochain complex obtained by applying the contravariant functor $\text{Hom}_{\orb}(-,M)$ to a free (projective) resolution of the constant functor $\underline{\Z}:G/H\mapsto\Z$ that sends objects to $\mathbb{Z}$ and all morphism to the identity map. In other words, one has \[\mathrm{H}^{\ast}_{\mathcal{F}}(G,M)=\mathrm{Ext}_{\orb}^{\ast}(\underline{\mathbb{Z}},M).\] The \emph{Bredon cohomological dimension} of $G$ is defined as
\[ \underline{\mathrm{cd}}(G) = \sup\{ n \in \mathbb{N} \ | \ \exists M \in \orbmod :  \mathrm{H}^n_{\mathcal{F}}(G,M)\neq 0 \}. \]
Standard techniques in homological algebra show that the number $\underline{\mathrm{cd}}(G)$ coincides with the length of the shortest free (or projective) resolution of $\underline{\mathbb{Z}}$ in $\orbmod$ (e.g.~see \cite[Lemma 4.1.6]{weibel}).\\

We now turn to Mackey functors. Mackey functors were introduced for finite groups by Dress and Green, and were studied extensively in this context by Th{\'e}venaz, Webb and others (e.g.~see \cite{ThevenazWebb95}, \cite{Webb00}). However, many of the elementary results obtained about Mackey functors for finite groups generalize to infinite groups and their family of finite subgroups. Our treatment of cohomology of Mackey functors follows the approach of Mart{\'\i}nez-P{\'e}rez and Nucinkis in \cite{MartinezNucinkis06}. We will briefly recall some facts about Mackey functors here and refer to \cite{Degrijse}, \cite{MartinezNucinkis06}, \cite{ThevenazWebb95} and \cite{Webb00} for more details.

Consider the diagrams of the form
\begin{equation} \label{eq: mackey morphism}G/S \xleftarrow{\varphi} \Delta \xrightarrow{\psi} G/K \end{equation}
where the maps $\varphi$ and $\psi$ are $G$-equivariant, $S,K \in \mathcal{F}$ and $\Delta = \coprod_{i=1}^n G/H_i$ is a finitely generated $G$-set with stabilizers in $\mathcal{F}$. A diagram of the form $(\ref{eq: mackey morphism})$ is said to be equivalent to a diagram \[G/S \xleftarrow{\varphi'} \Delta' \xrightarrow{\psi'} G/K\] if there exists a $G$-equivariant bijection $\theta: \Delta \rightarrow \Delta'$ such that $\varphi' \circ \theta =\varphi $ and $\psi' \circ \theta =\psi $. The set of equivalence classes of diagrams of the form $(\ref{eq: mackey morphism})$ is denoted by $\omega_{\mathcal{F}}(S,K)$. Note that we also allow the empty morphism $G/S \leftarrow \emptyset \rightarrow G/K$. One can check that $\omega_{\mathcal{F}}(S,K)$ is a free abelian monoid with disjoint union of $G$-sets as addition and the empty morphism as neutral element. The \emph{Mackey category} (or Burnside category) $\mathcal{M_F}G$ is defined as follows. Its objects are the $G$-sets $G/H$ for all $H \in \mathcal{F}$. The space of morphisms $\mathrm{Mor}(G/S,G/K)$ is by definition the abelian group completion of $\omega_{\mathcal{F}}(S,K)$. Composition is defined by taking pullbacks in the category of $G$-sets on basis morphisms and then extended by linearity.
We will denote the set of morphisms from $G/S$ to $G/K$ in $\mathcal{M_F}G$ by $\mathbb{Z}^G[S,K]$. Let us point out that this group is different from $\mathbb{Z}[G/S,G/K]$, which is the free abelian group generated by the morphisms from $G/S$ to $G/K$ in the orbit category $\orb$. The category $\mathrm{Mack}_{\mathcal{F}}G$ is the category with objects the contravariant additive functors $M: \mathcal{M_F}G \rightarrow \zmod$, and morphisms all natural transformations between these functors. An object of $\mathrm{Mack}_{\mathcal{F}}G$ is called a \emph{Mackey functor}. The category $\mathrm{Mack}_{\mathcal{F}}G$ is again an abelian category with enough projectives in which one can do homological algebra in a similar way as in $\orbmod$.  The free Mackey functors are the Mackey functors that are isomorphic to direct sums of functors of the form $\mathbb{Z}^G[-,K]$, for $K \in \mathcal{F}$. The \emph{Mackey cohomological dimension} of $G$, denoted by $\underline{\mathrm{cd}}_{\mathcal{M}}(G)$, is by definition
\[ \underline{\mathrm{cd}}_{\mathcal{M}}(G)=\sup\{ n \in \mathbb{N} \ | \ \exists M \in \mathrm{Mack}_{\mathcal{F}}G : \mathrm{Ext}^{n}_{\mathcal{M}_{\mathcal{F}}G}(\underline{A},M) \neq 0 \}. \]
Here $\underline{A}$ is the Burnside ring functor that takes $H \in \mathcal{F}$ to the Burnside ring $A(H)$ of $H$. As before, one shows using standard techniques that the invariant $\underline{\mathrm{cd}}_{\mathcal{M}}(G)$ coincides with the length of the shortest free (or projective) resolution of the Burnside ring functor in $ \mathrm{Mack}_{\mathcal{F}}G$. \\ 

Functors between orbit categories and Mackey categories give rise to induction, coinduction and restriction functors on the level of module categories, satisfying the usual adjointness properties (e.g. see \cite[Section 2]{MartinezNucinkis06}). One can construct a functor 

\begin{equation} \label{eq: pi functor}
\pi_G : \orb \rightarrow \mathcal{M_F}G
\end{equation}
that maps an object $G/K$ in $\orb$ to the object $G/K$ in $\mathcal{M_F}G$ and takes a morphism $G/S \xrightarrow{x} G/K$ to the morphism in the Mackey category represented by $G/S \xleftarrow{\mathrm{Id}} G/S \xrightarrow{x} G/K$. When the group under consideration is clear from the context, we will simply denote $\pi_G$ by $\pi$. The associated restriction and induction functors 
 \begin{equation}\label{eq: res2} \mathrm{res}_{\pi}: \mathrm{Mack}_{\mathcal{F}}G \rightarrow \orbmod: M \mapsto M\circ \pi = M^{\ast}, \end{equation}and
\begin{equation}\label{eq: ind} \mathrm{ind}_{\pi}: \orbmod \rightarrow \mathrm{Mack}_{\mathcal{F}}G : N \mapsto N(?) \otimes_{\orb}\mathbb{Z}^G[-,\pi(?)] \end{equation}
are connected via the adjointness isomorphism
\begin{equation}\label{eq: important adjointness} \mathrm{Hom}_{\orb}(N,M^{\ast}) \cong \mathrm{Hom}_{\mathcal{M_F}G}(\mathrm{ind}_{\pi}(N),M) \end{equation}
and the natural isomorphism
\begin{equation}   \label{eq: tensor adj} \mathrm{ind}_{\pi}(N)\otimes_{\mathcal{M}_{\mathcal{F}}G} L \cong N \otimes_{\orb}\mathrm{res}_{\pi}(L).   \end{equation}
where $M,N$ are contravariant functors and $L$ is a covariant functor. Similar constructions and adjointness isomorphisms of course also hold true when considering covariant additive functors $\mathcal{M_F}G \rightarrow \zmod$.

One can show that $ \mathrm{ind}_{\pi}(\mathbb{Z}[-,G/H])=\mathbb{Z}^G[-,H]$ and $ \mathrm{ind}_{\pi}(\underline{\mathbb{Z}})=\underline{A}$ (see \cite[Th. 3.7]{MartinezNucinkis06}). The functor $ \mathrm{ind}_{\pi}$ is not exact in general, but does preserve exactess of projective resolutions, which yields that
(see \cite[Th. 3.8.]{MartinezNucinkis06})  
\[\mathrm{Ext}^{n}_{\mathcal{M}_{\mathcal{F}}G}(\underline{A},M)\cong \mathrm{Ext}^{n}_{\mathcal{O}_{\mathcal{F}}G}(\underline{\mathbb{Z}},M^{\ast})=\mathrm{H}^{n}_{\mathcal{F}}(G,M^{\ast}) .\] 
for every $M \in  \mathrm{Mack}_{\mathcal{F}}G $ and every $n \in \mathbb{N}$. This implies that 
\begin{equation} \label{eq: cohom dim}  \underline{\mathrm{cd}}_{\mathcal{M}}(G)\leq \underline{\mathrm{cd}}(G). \end{equation}
If $G$ is virtually torsion-free,  its \emph{virtual cohomological dimension}, denoted by $\mathrm{vcd}(G)$, is by definition the cohomological dimension of any torsion-free finite index subgroup of $G$. This notion is well-defined due to a result of Serre (e.g. see \cite[Ch. VIII]{brown}). A surprising result proven by Mart{\'\i}nez-P{\'e}rez and Nucinkis in \cite{MartinezNucinkis06} says that, if $G$ is virtually torsion-free one has 
\[    \mathrm{vcd}(G)= \underline{\mathrm{cd}}_{\mathcal{M}}(G). \]
This equality is quite remarkable since the invariant on the left only involves the torsion-free part of $G$, while the invariant on the right uses the full structure of finite subgroups of $G$ in its definition.\\

There are several important classes of groups for which one has  $\underline{\mathrm{cd}}_{\mathcal{M}}(G)= \underline{\mathrm{cd}}(G)$. For instance, equality holds for elementary amenable groups of type $FP_\infty$ \cite{KMPN}, lattices in classical simple Lie groups \cite{ADMS}, mapping class groups \cite{Armart}, outer automorphism groups of free groups \cite{Luck2,Vogtmann} and for groups that act properly and chamber transitively on a building, such as Coxeter groups and graph products of finite groups \cite{DMP}. However, there are also groups for which  one has a strict inequality $\underline{\mathrm{cd}}_{\mathcal{M}}(G)< \underline{\mathrm{cd}}(G)$. Indeed, such examples have been constructed in  \cite{LearyNucinkis} and \cite{LP} (see also \cite{Martinez12}, \cite{DP2}) and arise as certain semi-direct products of Bestvina-Brady groups or right angled Coxeter groups with finite groups. These examples also show that one can have a strict inequality $\underline{\mathrm{cd}}_{\mathcal{M}}(G)< \underline{\mathrm{cd}}(G)$ for CAT(0)-groups and word-hyperbolic groups, and that the gap between these two invariants can be arbitrary big. Hence, even for groups $G$ that are very well behaved from many perspectives, e.g.~ they have strong cohomological finiteness properties and nice metric properties, the invariants  $\underline{\mathrm{cd}}_{\mathcal{M}}(G)$ and $\underline{\mathrm{cd}}(G)$ can be quite different. The most general statement about their relationship that is known at the moment is 
\begin{equation} \label{eq: length bound}\underline{\mathrm{cd}}_{\mathcal{M}}(G)  \leq \underline{\mathrm{cd}}(G) \leq \max_{H \in \mathcal{F}}\Big\{  \underline{\mathrm{cd}}_{\mathcal{M}}(W_G(H))+l(H)\Big\},  \end{equation}
where the length $l(H)$ of a finite group $H$ is the length of the longest chain of subgroups of $H$ and $W_G(H)=N_G(H)/H$ is the Weyl group of $H$ in $G$. Moreover, this upper bound is attained (see (\cite[Th. A ]{Degrijse})). In particular, if there is uniform bound on the length of finite subgroups of $G$, then the finiteness of $\underline{\mathrm{cd}}_{\mathcal{M}}(G)$ implies the finiteness of $\underline{\mathrm{cd}}(G)$, since $\underline{\mathrm{cd}}_{\mathcal{M}}(W_G(H))\leq \underline{\mathrm{cd}}_{\mathcal{M}}(G)$ for all $H\in \mathcal{F}$ (see \cite[eq. (11) \& Lemma 5.1]{Degrijse}).  However, if there is no such bound for a certain group $G$, then it is still an open problem whether or not one can have $\underline{\mathrm{cd}}_{\mathcal{M}}(G)<\infty$ but $\underline{\mathrm{cd}}(G)=\infty$.  \\

Finally, it is also worth mentioning that one always has
\[ \mathrm{cd}_{\mathbb{Q}}(G)\leq \underline{\mathrm{cd}}_{\mathcal{M}}(G), \]
where $\mathrm{cd}_{\mathbb{Q}}(G)$ denotes the rational cohomological dimension of $G$. This follows from the fact that if $F_{\ast}\rightarrow \underline{A}$ is a free resolution in $\mathrm{Mack}_{\mathcal{F}}G$, then $F(G/e)\otimes_{\mathbb{Z}}\mathbb{Q} \rightarrow A(G/e)\otimes_{\mathbb{Z}}\mathbb{Q}=\mathbb{Q} $ is a projective $\mathbb{Q}[G]$-resolution of $\mathbb{Q}$. Note that this inequality can be strict (e.g. see \cite[Ex. 8.5.8]{DavisBook}) and that it implies that $\underline{\mathrm{cd}}_{\mathcal{M}}(G)=0$ if and only if $G$ is finite.

\section{Proper $G$-spaces and proper $G$-spectra}
Throughout this section, let $G$ be a discrete group and let $\mathcal{F}$ be the family of finite subgroups of $G$. If $X$ is a $G$-space, then $X_{+}$ denotes the space obtained by adding a disjoint $G$-fixed basepoint $\ast$ to $X$. In what follows we will freely make use of the language of triangulated categories (e.g.~see \cite{Gelfand-Manin}) and model categories (e.g.~see \cite{Hovey}).

Denote by $G\mbox{-}\mathrm{Top}^{\mathcal{F}}_{\ast}$ the model category of compactly generated weak Hausdorff spaces equipped with a continuous $G$-action and $G$-fixed basepoint together with $G$-equivariant based continuous maps, where weak-equivalences and fibrations are required to be weak-equivalences and fibrations on $H$-fixed point spaces, for all $H \in \mathcal{F}$ (see \cite[Section 2.1-2.3]{Fau}). The associated homotopy category is denoted by $\mathrm{Ho}(G\mbox{-}\mathrm{Top}^{\mathcal{F}}_{\ast})$. Note that if $H$ is a subgroup of $G$, then $G/H_{+}$ is a cofibrant object in $G\mbox{-}\mathrm{Top}^{\mathcal{F}}_{\ast}$ if and only if $H \in \mathcal{F}$. In particular $G/G_{+}=S^{0}$ is not a cofibrant object in $G\mbox{-}\mathrm{Top}^{\mathcal{F}}_{\ast}$ if $G$ is infinite. 

Now let $X$ be a model for $\underline{E}G$. Since this by definition means that $X$ is a proper $G$-CW-complex such that the map $X \rightarrow G/G$ is a weak equivalence on $H$-fixed point sets, for all $H \in \mathcal{F}$, it follows that 
\begin{equation}\label{eq: proper decomp} S^0 \cong \mathrm{colim}_{n} X^n_{+} \end{equation}
in $\mathrm{Ho}(G\mbox{-}\mathrm{Top}^{\mathcal{F}}_{\ast})$, where $ X^{n}=\{+\}$ if $n<0$ while for $n\geq 0$, one has inclusions of based $G$-spaces $ X^{n-1}_{+}\rightarrow  X^n_{+}$ that fit into homotopy cofiber sequences
\[   X ^{n-1}_{+}\rightarrow X^n_{+}\rightarrow X^n/X^{n-1}=\bigvee_{i \in I_n}\Sigma^n G/H_{i \ +} \rightarrow \Sigma X^{n-1}_{+},   \]
such that $H_i \in \mathcal{F}$ for all $i \in I_n$ and $\Sigma^n$ denotes the smash product with $S^n$. The isomorphism (\ref{eq: proper decomp}) is called a \emph{proper decomposition of $S^0$}. Note in particular that $X_{+}$ is a cofibrant replacement of $S^0$ in $G\mbox{-}\mathrm{Top}^{\mathcal{F}}_{\ast}$ and that the dimension of $X$ as a $G$-CW-complex corresponds to the smallest $d$ such that $X^{n-1}\xrightarrow{\cong} X^{n}$ for all $n\geq d+1$. Note also that $X$ is a $G$-CW-complex of finite type (meaning that the orbit space $G\setminus X$ has finitely many cells in each dimension) if and only if the sets $I_n$ are finite for all $n$. \\

We recall from \cite[Section 6]{Fau} that a \emph{$G$-spectrum} is an orthogonal spectrum $X$ equipped with a $G$-action
\[    G \rightarrow \mathrm{Aut}( X).  \]
A morphism of $G$-spectra is a morphism of the underlying orthogonal spectra that commutes with the $G$-action. The category of $G$-spectra is denoted by $\Gspec$.
For every $H \in \mathcal{F}$, one can consider the category $\Hspec$ of orthogonal $H$-spectra \cite{MandellMay} and the restriction functor
\[   \mathrm{res}_H^G : \Gspec \rightarrow  \Hspec :  X \mapsto  \mathrm{res}_H^G( X).   \]
Any orthogonal $H$-spectrum $Y$ can be evaluated on an arbitrary finite dimensional orthogonal $H$-representation $V$. The $H$-space $Y(V)$ is defined by 
$$Y(V) = L(\mathbb{R}^n, V)_+ \wedge_{O(n)} Y_n,$$
where the number $n$ is the dimension of $V$, the vector space $\mathbb{R}^n$ is equipped with the standard scalar product and $L(\mathbb{R}^n, V)$ is the space of (not necessarily equivariant) linear isometries from $\mathbb{R}^n$ to $V$. The $H$-action on $Y(V)$ is diagonal.
This construction allows one to define genuine $H$-\emph{equivariant homotopy groups} 
$$\pi_k^H Y = \colim_{V \subset \mathcal{U} }[S^{\mathbb{R}^k \oplus V}, Y(V)]^H, \;\; k \in \mathbb{Z},$$
where $V$ ranges over all finite dimensional $H$-subrepresentations of a complete $H$-universe $\mathcal{U}$ (see \cite[Section III.3]{MandellMay} for details). Given a $G$-spectrum $X$ and a finite subgroup $H$ of $G$, one defines $\pi_k^HX$ to be $\pi_k^H \mathrm{res}_H^G( X)$. Finally, a map $f \colon X \rightarrow X'$ of $G$-spectra is called a \emph{(proper) stable equivalence} if the induced map
$$\pi_k^H(f) \colon \pi_k^H X \rightarrow \pi_k^H X'$$
is an isomorphism for any integer $k$ and any $H \in \mathcal{F}$.

It follows from \cite[Section 6]{Fau} that there is a stable model category structure on $\Gspec$ with weak equivalences the stable equivalences. We refer to this model category as the model category of \emph{proper $G$-spectra}. The term \emph{proper} here does not refer to the action of $G$ on $X$, but rather to the form of weak equivalences in $\Gspec$. 

The homotopy category associated to the stable model category $\Gspec$ will be denoted by $\mathrm{Ho}(\Gspec)$. The category $\mathrm{Ho}(\Gspec)$ is naturally a triangulated category (see \cite[Chapter 7]{Hovey}), whose distinguished triangles will be called \emph{stable cofiber sequences} and whose suspension functor will be denoted by $\Sigma$, while its inverse will be denoted by $\Sigma^{-1}$ as usual. The abelian group of morphisms from  $ X$ to $ Y$ in $\mathrm{Ho}(\Gspec)$ will be denoted by $[ X, Y]^G$. Recall that an object $ X$ of $\mathrm{Ho}(\Gspec)$ is called \emph{compact} if the functor $[ X, -]^G$ preserves infinite coproducts.

There is a suspension functor (see \cite[Section 6]{Fau})
\[    \Sigma^{\infty}: G\mbox{-}\mathrm{Top}^{\mathcal{F}}_{\ast} \rightarrow \Gspec: X \mapsto \Sigma^{\infty}X \]
that is a left Quillen functor and hence preserves cofibrations and weak equivalences between cofibrant objects. Thus it yields a derived functor

\[    \Sigma^{\infty}: \mathrm{Ho}(G\mbox{-}\mathrm{Top}^{\mathcal{F}}_{\ast}) \rightarrow  \mathrm{Ho}(\Gspec) \]
that sends homotopy cofiber sequences to stable cofiber sequences. If $H$ is a subgroup of $G$, we will abuse notation slightly and sometimes denote $\Sigma^{\infty}G/H_{+}$ by $G/H_{+}$, hoping that it will be clear from the context what is meant. In particular, letting $H=G$, we will denote $\Sigma^{\infty}G/G_{+}$ by $S^0$. Note that $G/H_{+}$ is a cofibrant object in $\Gspec$ if and only if $H \in \mathcal{F}$. In particular, $S^0$ is not a cofibrant object in $\Gspec $ if $G$ is infinite. Recall that a cofibrant replacement of $S^0$ in $\Gspec $ is a cofibrant object that is weakly equivalent to $S^0$, i.e.~isomorphic to $S^{0}$ in $\mathrm{Ho}(\Gspec)$.

\begin{definition} \label{def: stable model} \rm A \emph{stable model for} $\underline{E}G$ consists of a collection of $G$-spectra $\{X^n\}_{n\in \mathbb{Z}}$ together with a collection of morphisms $ X^{n-1}\rightarrow  X^n$ in $\mathrm{Ho}(\Gspec)$, where $ X^{n}=\{\ast\}$ if $n<0$ while for each $n\geq 0$ there exists a stable cofiber sequence
\[      X^{n-1} \rightarrow  X^{n} \rightarrow   \bigvee_{i \in I_n} \Sigma^n G/H_{i \ +} \rightarrow \Sigma  X^{n-1}   \]
such that $H_i \in \mathcal{F}$ for all $i \in I_n$, and 
\begin{equation}   \label{eq: def stable model}  \mathrm{hocolim}_n  X^n \cong S^0 \end{equation}
in $\mathrm{Ho}(\Gspec)$. In analogy with the unstable case, we call the isomorphism (\ref{eq: def stable model}) a \emph{stable proper decomposition of $S^0$}. If there exists a $d$ such that $X^{n-1}\xrightarrow{\cong} X^{n}$ for all $n\geq d+1$, then the stable model for $\underline{E}G$ is called \emph{finite dimensional}. In this case, the smallest such $d$ is called the \emph{dimension} of the stable model for $\underline{E}G$. A stable model for $\underline{E}G$ is said to be of \emph{finite type} if the sets $I_n$ are finite for all $n\geq 0$. A stable model for $\underline{E}G$ is a said to be \emph{finite} if it is both finite dimensional and of finite type. 
\end{definition}
\begin{remark}\rm \label{remark: terminology}In what follows we shall sometimes abuse terminology and refer to a certain $G$-spectrum $X$ as a stable model for $\underline{E}G$. It should be understood that in this case we mean that $X$ is a specific model for the homotopy colimit $\mathrm{hocolim}_n  X^n$ where the $G$-spectra $X^n$ satisfy all the assumptions of Definition \ref{def: stable model}. In other words, we will often threat a stable model for $\underline{E}G$ as a $G$-spectrum, keeping in mind that the specific $G$-spectra $X^n$ and morphisms $X^{n-1}\rightarrow X^{n}$ are part of the data. In particular, if the collection of $G$-spectra $\{X^n\}_{n\in \mathbb{Z}}$ form an $m$-dimensional stable model for $\underline{E}G$, then we might as well take $X^n=X^m$ for all $n\geq m$  and refer to $X^m=\mathrm{hocolim}_n  X^n$ as an $m$-dimensional stable model for $\underline{E}G$.

\end{remark}

The triangulated category $\mathrm{Ho}(\Gspec)$ is compactly generated by the set of compact generators
\[\{G/H_{+} \; |\; H \in \mathcal{F} \}.\]
This follows from the fact that for $H \in \mathcal{F}$, $n \in \mathbb{Z}$ and $ X \in \mathrm{Ho}(\Gspec)$, there is a natural isomorphism (see \cite[Lemma 6.11]{Fau})
\[[\Sigma^n G/H_{+},  X]^G \cong [S^n,  X]^H =  \pi^{H}_{n}( X).\] 
Using the double coset formula and the Wirthm\"uller isomorphism \cite{MayB}, the latter also implies that for finite subgroups $H$ and $K$ of $G$, we have an isomorphism
\[[\Sigma^{\infty} G/K_{+}, \Sigma^{\infty} G/H_{+}]_*^G \cong  \bigoplus_{[g] \in K \setminus G / H} \pi_*^{K \cap {}^g H} (S^0).\]
In particular when $\ast=0$, we get
\[[\Sigma^{\infty} G/K_{+}, \Sigma^{\infty} G/H_{+}]^G \cong  \bigoplus_{[g] \in K \setminus G / H} A({K \cap {}^g H}),\]
where we remind the reader that $A(K \cap {}^g H)$ denotes the Burnside ring of the finite group $K \cap {}^g H$. It now follows from \cite[Proposition 3.1]{MartinezNucinkis06} that the Mackey category $\mathcal{M}_{\mathcal{F}}G$ fully faithfully embeds into $\mathrm{Ho}(\Gspec)$ by sending $G/H$ to $\Sigma^{\infty}G/H_{+}$, for any $H \in \mathcal{F}$. Summarizing the discussion we see that the abelian groups 
\[ \pi^{H}_{n}( X) \cong [\Sigma^n G/H_{+},  X]^G\]
assemble together to form a Mackey functor
\[     \pi^{-}_{n}( X) : \mathcal{M}_{\mathcal{F}}G \rightarrow \mathbb{Z}\mbox{-mod}: G/H \mapsto  \pi^{H}_{n}( X) \]
such that the functor $\pi^{-}_{0}(G/K_+)$ is isomorphic to the free Mackey functor $\mathbb{Z}^G[-,K]$ and the functor  $\pi^{-}_{0}(S^0)$ is isomorphic to the Burnside ring functor $\underline{A}$. 

Next we note that the observations above together with \cite[Theorem 3.4]{MandellMay} imply that any stable cofiber sequence
\[      X \rightarrow  Y \rightarrow  Z \xrightarrow{\partial} \Sigma X \]
in $\mathrm{Ho}(\Gspec)$ induces an exact sequence of Mackey functors
\[  \pi^{-}_{n}( X) \rightarrow  \pi^{-}_{n}( Y) \rightarrow  \pi^{-}_{n}( Z) \xrightarrow{\partial_{\ast}}  \pi^{-}_{n}(\Sigma X) \rightarrow  \pi^{-}_{n}( \Sigma Y) \]
for every $n \in \mathbb{Z}$. The suspension and desuspension isomorphisms
\[   \pi^{-}_{n}(\Sigma X) \cong  \pi^{-}_{n-1}( X)      \]
and 
\[   \pi^{-}_{n}(\Sigma^{-1} X) \cong  \pi^{-}_{n+1}( X)      \]
can be used to splice these exact sequence together and form long exact sequences.\\

Another consequence of the fact that $\mathrm{Ho}(\Gspec)$ is compactly generated by the set
\[\{G/H_{+} \; |\; H \in \mathcal{F} \}\]
is the existence of Eilenberg-MacLane objects for Mackey functors. Given a Mackey functor $M : \mathcal{M}_{\mathcal{F}}G \rightarrow \mathbb{Z}\mbox{-mod}$, there exists a $G$-spectrum $HM$, called the \emph{Eilenberg-MacLane spectrum of $M$} such that $\pi^{-}_{0}(HM)$ is isomorphic as a Mackey functor to $M$ and $\pi^{-}_{n}(HM)=0$ if $n \neq 0$. Moreover, $HM$ is unique up to stable homotopy. This follows from \cite[Proposition 3.8]{Schwede} which asserts that under certain general conditions, a set of compact generators in any triangulated category with infinite sums determines a $t$-structure (see also \cite{Alonso}). The Eilenberg-MacLane objects are then just the objects of the heart of this $t$-structure. The paper \cite{DHLPS} shows that the spectrum $HM$ represents the $G$-equivariant Bredon cohomology with coefficients in $M$. \\

Finally, in light of Remark \ref{remark: terminology}, we can say that a stable model for $\underline{E}G$ is a proper cellular stable decomposition of $S^0$ and that finite stable models for $\underline{E}G$ are compact objects of $\mathrm{Ho}(\Gspec)$ because they can be built in finitely many steps by iterated stable cofiber sequences from shifts of the suspension spectra $\Sigma^{\infty}G/H_{+}$ for suitable finite subgroups $H$ of $G$. The objects $\Sigma^{\infty}G/H_{+}$ are compact as pointed out above and the class of compact objects in any triangulated category is closed under 2-out-of-3 in distinguished triangles, so the claim follows. By the properties of the functor $\Sigma^{\infty}$ listed above and the fact that $S^{0}$ is cofibrant in $H\mbox{-}\mathrm{Top}^{\mathcal{F}}_{\ast}$ for all finite $H$, it follows that if $X$ is a model for $\underline{E}G$, then $\Sigma^{\infty}X_{+}$ is a stable model for $\underline{E}G$. Moreover, if $\dim(X)=d$ then $\Sigma^{\infty}X_{+}$ is $d$-dimensional and if $X$ is of finite type, then so is $\Sigma^{\infty}X_{+}$.

\section{Geometric versus cohomological dimension}
Throughout this section, let $G$ be a discrete group and let $\mathcal{F}$ be the family of finite subgroups of $G$. Recall that the geometric dimension for proper actions $\underline{\mathrm{gd}}(G)$ of $G$ is by definition the smallest possible dimension that a model for $\underline{E}G$ can have and note that if $X$ is a model for $\underline{E}G$, then the cellular chain complexes \[   \ldots \rightarrow C_n(X^{H})  \rightarrow  C_{n-1}(X^{H}) \rightarrow \ldots \rightarrow C_{0}(X^{H}) \rightarrow \mathbb{Z}\rightarrow 0 \]
of the fixed points subspaces $X^H$, for all $H \in \mathcal{F}$, assemble to form a free resolution $C_{\ast}(X^{-})\rightarrow \underline{\mathbb{Z}}$ in $\orbmod$ (see \cite{LuckMeintrup}). This implies that $\underline{\mathrm{cd}}(G)\leq \underline{\mathrm{gd}}(G)$. In \cite[Th. 0.1.]{LuckMeintrup} it is shown that one even has
\[   \underline{\mathrm{cd}}(G)\leq \underline{\mathrm{gd}}(G) \leq \max\{3,\underline{\mathrm{cd}}(G)\}.    \]
It is not hard to check that $\underline{\mathrm{gd}}(G)=0$ if and only if $\underline{\mathrm{cd}}(G)=0$ if and only if $G$ is finite. Since one has $\underline{\mathrm{cd}}(G)=1$ if and only if $\underline{\mathrm{gd}}(G)=1$ by \cite[Cor. 1.2]{Dunwoody}, we conclude that the invariants  $\underline{\mathrm{cd}}(G)$ and $\underline{\mathrm{gd}}(G)$ coincide, except for the possibility that one could have  $\underline{\mathrm{cd}}(G)=2$ but  $\underline{\mathrm{gd}}(G)=3$. That this Eilenberg-Ganea exception actually occurs in the torsion setting is shown in \cite{BradyLearyNucinkis} (see also Section \ref{sec: Examples}). We conclude that in the unstable world there is a nice geometric interpretation for the algebraic invariant $\underline{\mathrm{cd}}(G)$. The main purpose of this section is the show that there is a similar geometric interpretation for the algebraic invariant  $\underline{\mathrm{cd}}_{\mathcal{M}}(G)$ in the stable world.

\begin{definition} \rm  The \emph{stable geometric dimension for proper actions of $G$}, denoted by  $\underline{\mathrm{gd}}_{\mathrm{st}}(G)$, is the smallest integer $d$ such that there exists a $d$-dimensional stable model for $\underline{E}G$. If such an integer does not exist, then we set $\underline{\mathrm{gd}}_{\mathrm{st}}(G)=\infty$.

\end{definition}

\begin{theorem}\label{th: main theorem} For any discrete group $G$, one has
\[    \underline{\mathrm{cd}}_{\mathcal{M}}(G)  =  \underline{\mathrm{gd}}_{\mathrm{st}}(G).    \]
In particular, if $G$ is virtually torsion-free then
\[   \mathrm{vcd}(G) =  \underline{\mathrm{gd}}_{\mathrm{st}}(G).    \]
\end{theorem}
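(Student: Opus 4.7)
The plan is to prove the two inequalities $\underline{\mathrm{cd}}_\mathcal{M}(G)\leq\underline{\mathrm{gd}}_\mathrm{st}(G)$ and $\underline{\mathrm{gd}}_\mathrm{st}(G)\leq\underline{\mathrm{cd}}_\mathcal{M}(G)$ separately. The ``in particular'' statement then follows immediately from the Mart\'inez-P\'erez--Nucinkis identity $\mathrm{vcd}(G)=\underline{\mathrm{cd}}_\mathcal{M}(G)$ already recalled in Section 2.

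For the first inequality, I would assume $\underline{\mathrm{gd}}_{\mathrm{st}}(G)=d<\infty$, fix a $d$-dimensional stable model $\{X^n\}$ for $\underline{E}G$, and let $HM$ denote the Eilenberg--MacLane spectrum associated to a Mackey functor $M$. By the representability result of \cite{DHLPS}, $\mathrm{Ext}^k_{\mathcal{M}_{\mathcal{F}}G}(\underline{A}, M)\cong [S^0,\Sigma^k HM]^G$. The key identification $[\Sigma^n G/H_+,\Sigma^k HM]^G = \pi^H_{n-k}(HM)$---which equals $M(G/H)$ when $k=n$ and vanishes otherwise---combined with an induction on $n$ applied to the defining cofiber triangle $X^{n-1}\to X^n\to\bigvee_i\Sigma^n G/H_{i+}$ yields $[X^n,\Sigma^k HM]^G=0$ for all $k>n$. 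Since $\mathrm{hocolim}_n X^n\cong S^0$ stabilizes at stage $d$, this gives $\mathrm{Ext}^k_{\mathcal{M}_{\mathcal{F}}G}(\underline{A},M)=0$ for every $k>d$ and every $M$, whence $\underline{\mathrm{cd}}_\mathcal{M}(G)\leq d$.

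For the reverse inequality I would take a length-$d$ free resolution
\[ 0\to F_d\xrightarrow{\partial_d}\cdots\to F_0\xrightarrow{\epsilon}\underline{A}\to 0 \]
in $\mathrm{Mack}_{\mathcal{F}}G$ with $F_n=\bigoplus_{i\in I_n}\mathbb{Z}^G[-,H_{n,i}]$, and realize it geometrically. Setting $W^n:=\bigvee_{i\in I_n}\Sigma^\infty G/H_{n,i+}$, the full-faithful embedding $\mathcal{M}_{\mathcal{F}}G\hookrightarrow\mathrm{Ho}(\Gspec)$ lifts each $\partial_n$ to a unique stable map $\tilde\partial_n:W^n\to W^{n-1}$ and lifts $\epsilon$ to a map $e_0:W^0\to S^0$. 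I would then construct the tower $\{X^n\}$ and compatible maps $e_n:X^n\to S^0$ by induction: set $X^0=W^0$, and at stage $n$ produce an attaching map $\alpha_n:\Sigma^{n-1}W^n\to X^{n-1}$ whose composition with the quotient $X^{n-1}\to\Sigma^{n-1}W^{n-1}$ equals $\Sigma^{n-1}\tilde\partial_n$; then define $X^n=\mathrm{hocofib}(\alpha_n)$. The relevant obstructions reduce (after repeated use of the long exact sequences) to compositions of the form $\tilde\partial_{n-1}\circ\tilde\partial_n$ and $e_0\circ\tilde\partial_1$, all of which vanish by the chain complex condition $\partial_{n-1}\partial_n=0$, $\epsilon\partial_1=0$ together with faithfulness of the embedding. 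After $d$ stages the tower stabilizes at $X^d$, whose associated cellular chain complex in $\mathrm{Mack}_{\mathcal{F}}G$ recovers the given $F_\bullet$ by construction.

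The principal obstacle I anticipate is the final step: showing that the compatible map $e_d:X^d\to S^0$ is in fact a stable equivalence rather than merely an isomorphism on $\pi_0^-$. By construction $\pi_0^-(e_d)$ is the identity on $\underline{A}$; to propagate this to every Mackey functor $\pi_k^-$, I would analyze the spectral sequence associated to the cellular filtration of $X^d$, whose $d^1$-differentials are encoded by $F_\bullet$. Exactness of the resolution should force collapse of this spectral sequence and identify $\pi^-_*(X^d)$ with $\pi^-_*(S^0)$; comparing with the trivial filtration on $S^0$ via naturality with respect to the compatible maps $e_n$, the cofiber of $e_d$ has vanishing homotopy on every compact generator $G/H_+$ with $H\in\mathcal{F}$, and is therefore zero in $\mathrm{Ho}(\Gspec)$. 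Making the spectral-sequence comparison rigorous, and handling any higher coherence issues that arise in choosing the attaching maps $\alpha_n$ compatibly across stages, will be the most delicate technical point of the proof.
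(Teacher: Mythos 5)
Your argument for $\underline{\mathrm{cd}}_{\mathcal{M}}(G)\leq\underline{\mathrm{gd}}_{\mathrm{st}}(G)$ is correct and genuinely different from the paper's: you deduce the vanishing of $\mathrm{Ext}^k_{\mathcal{M}_{\mathcal{F}}G}(\underline{A},M)\cong[S^0,\Sigma^kHM]^G$ for $k>d$ by a cellular induction against Eilenberg--MacLane spectra, whereas the paper runs the dual, homological argument: it extracts from the skeletal cofiber sequences an explicit chain complex of free Mackey functors $\pi^-_n(X^n/X^{n-1})$ and proves via the stable Hurewicz theorem (applied to $\mathrm{res}^G_K$ and the Eilenberg--MacLane spectrum of the Burnside ring) that this is a length-$d$ free resolution of $\underline{A}$. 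The paper's version is stronger in a useful way --- the explicit resolution is reused for the finiteness results in Section 5 --- but for the inequality itself your route is perfectly adequate, granting the representability input from the cited forthcoming work.

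The reverse inequality, however, has a genuine gap. You propose to realize a prescribed free resolution $F_\bullet$ by lifting each differential $\partial_n$ to $\tilde\partial_n:W^n\to W^{n-1}$ and then finding $\alpha_n:\Sigma^{n-1}W^n\to X^{n-1}$ with $q\circ\alpha_n=\Sigma^{n-1}\tilde\partial_n$. The obstruction to the existence of such an $\alpha_n$ is the class $\delta\circ\Sigma^{n-1}\tilde\partial_n\in[\Sigma^{n-1}W^n,\Sigma X^{n-2}]^G$, where $\delta$ is the connecting map of the cofiber sequence $X^{n-2}\to X^{n-1}\to\Sigma^{n-1}W^{n-1}$. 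The chain condition $\partial_{n-1}\partial_n=0$ only kills the image of this class in $[\Sigma^{n-1}W^n,\Sigma^{n-1}W^{n-2}]^G\cong\mathrm{Hom}(F_n,F_{n-2})$; the remaining part lies in the image of $[\Sigma^{n-2}W^n,X^{n-3}]^G$ and is built from positive-degree equivariant stable stems such as $\pi_1^{K\cap{}^gH}(S^0)\supseteq\mathbb{Z}/2$, so it has no reason to vanish (the first problematic stage is already $n=3$). This is the classical obstruction to realizing a chain complex by a cell structure, and your claim that "the relevant obstructions reduce to compositions $\tilde\partial_{n-1}\circ\tilde\partial_n$" is where the argument breaks. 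The paper sidesteps this entirely: it starts from $\Sigma^\infty\underline{E}G_+$, whose skeleta $X^n$ already carry compatible maps to $S^0$ with the right connectivity, and only modifies dimension $m=\underline{\mathrm{cd}}_{\mathcal{M}}(G)$. There, $\ker d_{m-1}$ is projective, is made free by an Eilenberg swindle (wedging trivial cells onto $X^{m-1}$), and the final attaching map is produced \emph{geometrically} as the composite $\bigvee_\alpha\Sigma^mG/H_{\alpha+}\to Y^m\xrightarrow{\partial}\Sigma X^{m-1}$ through the cofiber $Y^m$ of the $(m-1)$-connected map $X^{m-1}\to S^0$, using the stable Hurewicz theorem rather than a lift from algebra; Hurewicz applied to the connective cofiber $Y^{m+1}$ then shows the resulting $X^m\to S^0$ is an equivalence, which also replaces your proposed spectral-sequence comparison. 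To repair your proof you should adopt this "kill homotopy groups of the cofiber" strategy (as the paper also does in Theorem \ref{th: finite type}) instead of realizing a chosen resolution cell by cell.
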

The next two subsections will be devoted to proving this theorem. The proof will also provide us with a method to construct stable models for $\underline{E}G$ that do not (necessarily) come from suspending unstable models. This will be illustrated in Section 6.\\

\subsection{The proof of $\leq$ } \label{easy}
If $\underline{\mathrm{gd}}_{\mathrm{st}}(G)=\infty$, then there is nothing to prove. So let's assume that $\underline{\mathrm{gd}}_{\mathrm{st}}(G)=m$ is finite and let $ X^m$ be an $m$-dimensional stable model for $\underline{E}G$ (see Remark \ref{remark: terminology}). For each $n\geq 1$, consider the stable cofiber sequences
\[    X^{n-2} \xrightarrow{i}  X^{n-1} \xrightarrow{\pi}    X^{n-1}/ X^{n-2} \xrightarrow{\partial} \Sigma   X^{n-2}  \]
and 
\[    X^{n-1} \xrightarrow{i}  X^{n} \xrightarrow{\pi}    X^{n}/ X^{n-1}  \xrightarrow{\partial} \Sigma   X^{n-1}.  \]
and define
\[      d_n:    \pi^{-}_{n}( X^n/ X^{n-1}) \xrightarrow{\partial_{\ast}}   \pi^{-}_{n}(\Sigma X^{n-1}) \xrightarrow{-(\Sigma \pi){\ast}} \pi^{-}_{n}(\Sigma X^{n-1}/ X^{n-2}) \cong  \pi^{-}_{n-1}( X^{n-1}/ X^{n-2}).   \]
Since $\partial \circ \pi = 0$, one has $d_n \circ d_{n-1}=0$ and
\[     \pi^{-}_{n}( X^n/ X^{n-1}) \cong \pi^{-}_{n}(\bigvee_{i\in I_n} \Sigma^n G/H_{i \ +}) \cong \bigoplus_{i\in I_n} \pi^{-}_{n}(\Sigma^n G/H_{i \ +}) \cong\bigoplus_{i\in I_n} \mathbb{Z}^{G}[-,H_i],    \]
Hence we obtain a chain complex of free Mackey functors
\begin{equation}\label{eq: res} 0 \rightarrow  \pi^{-}_{m}( X^m/ X^{m-1}) \xrightarrow{d_m}  \pi^{-}_{m-1}( X^{m-1}/ X^{m-2})  \xrightarrow{d_{m-1}} \ldots   \xrightarrow{d_2}\pi^{-}_{1}( X^{1}/ X^{0})  \xrightarrow{d_1}  \pi^{-}_{0}( X^{0})  \xrightarrow{d_0} 0.   \end{equation}
We claim that the homology of this chain complex is zero, except in degree zero where it is isomorphic to the Burnside ring functor $\underline{A}$ via the the inclusion $X^0 \rightarrow X^m\cong S^0$.

To prove this claim, let us first fix $K \in \mathcal{F}$ and denote $A=\mathrm{res}_K^G\underline{A}$ . Recall from \cite{LewisMayMcClure} that Bredon homology of $K$-spectra with Burnside ring coefficients, denoted by $\mathrm{H}^{K}_{\ast}(-)$, is represented by an Eilenberg-Maclane $K$-spectrum $ HA$. By \cite[Th. 2.1]{Lewis1}, the unit map $S^{0}\rightarrow  HA$ induces a stable Hurewicz isomorphism
\[      \pi^{K}_{0}( D) \xrightarrow{\cong}  \pi^{K}_{0}( D\wedge  HA)=\mathrm{H}^K_{0}( D)   \]
for every connective $K$-spectrum $ D$. Denote  $Y^n=\mathrm{res}^G_K( X^n) $ for all $n\in \mathbb{Z}$. Since $\mathrm{res}^G_K$ is exact and
\[     \mathrm{res}^G_K(G/H_{i \ +})=\bigvee_{[g] \in K \setminus G/ H_i} K/K\cap {}^gH_{i \ +}\]
it follows that $ Y^m=\mathrm{hocolim}_n  Y^n$ is a $K$-CW-spectrum that is isomorphic to $S^0=K/K_{+}$ in $\mathrm{Ho}(\Kspec)$ and whose cellular decomposition is given by restricting the stable cofiber  sequences of $ X$ to $\mathrm{Ho}(\Kspec)$. For each $n$, there is a commutative diagram of cofiber sequences of $K$-spectra
\[\xymatrix{ Y^{n-1} \ar[r] \ar[d] &  Y^n \ar[r] \ar[d] &  Y^n/ Y^{n-1} \ar[r] \ar[d] & \Sigma  Y^{n-1} \ar[d]\\
 Y^{n-1}\wedge  HA \ar[r]  &  Y^n\wedge  HA  \ar[r] &  Y^n/ Y^{n-1}\wedge  HA  \ar[r]  & \Sigma  Y^{n-1}\wedge  HA.  }\]
Applying $K$-homotopy groups to this sequence and using the fact that $\mathrm{res}^G_K(G/H_{i \ +})$ is a connective $K$-spectrum, we conclude that the chain complex (\ref{eq: res}) evaluated at $K$ is naturally isomorphic to
\begin{equation*} 0 \rightarrow  \mathrm{H}^{K}_{m}( Y^m/ Y^{m-1}) \xrightarrow{d_m}  \mathrm{H}^{K}_{m-1}( Y^{m-1}/ Y^{m-2})  \xrightarrow{d_{m-1}} \ldots   \xrightarrow{d_2}\mathrm{H}^{K}_{1}( Y^{1}/ Y^{0})  \xrightarrow{d_1}  \mathrm{H}^{K}_{0}( Y^{0})  \xrightarrow{d_0} 0.   \end{equation*}
Since this is the cellular chain complex of $ Y^m$, its homology computes \[\mathrm{H}^K_{\ast}( Y^m)\cong \mathrm{H}^K_{\ast}(S^{0})\cong \pi^K_{\ast}( HA)\] (e.g. see \cite[Ch. XIII]{May}). This shows that (\ref{eq: res}), evaluated at $K \in \mathcal{F}$, has zero homology except in degree zero, where it is isomorphic to $A(K)$ via the inclusion $Y^0 \rightarrow Y^m\cong S^0$. This proves our claim. 

We conclude that there is a free resolution 
\begin{equation} \label{eq: tralala} 0 \rightarrow  \pi^{-}_{m}( X^m/ X^{m-1}) \xrightarrow{d_m} \ldots   \xrightarrow{d_2}\pi^{-}_{1}( X^{1}/ X^{0})  \xrightarrow{d_1}  \pi^{-}_{0}( X^{0})  \xrightarrow{d_0} \underline{A}\rightarrow 0   \end{equation}
of $\underline{A}$ in $\mathrm{Mack}_{\mathcal{F}}G$ of lenght $m$, which implies that $\underline{\mathrm{cd}}_{\mathcal{M}}(G)\leq m$ and hence 
\[    \underline{\mathrm{cd}}_{\mathcal{M}}(G)  \leq  \underline{\mathrm{gd}}_{\mathrm{st}}(G).    \]
\subsection{The proof of $\geq$ } \label{sec: hard}
If $\underline{\mathrm{cd}}_{\mathcal{M}}(G) =\infty$ then we are done since $ \underline{\mathrm{cd}}_{\mathcal{M}}(G)  \leq  \underline{\mathrm{gd}}_{\mathrm{st}}(G)  $. If $\underline{\mathrm{cd}}_{\mathcal{M}}(G) = 0$, then $G$ is finite which implies that $\underline{\mathrm{gd}}_{\mathrm{st}}(G) =0$ since in this case $G/G_{+}$ is a stable model for $\underline{E}G$ of dimension zero. Now assume that $\underline{\mathrm{cd}}_{\mathcal{M}}(G) =m \geq 1$ is finite. Let $\underline{E}G$ be a (possible infinite dimensional) classifying space for proper actions. Such a space always exists by \cite[Th. 1.9]{Luck2}. Applying the functor $\Sigma^{\infty}(-)_{+}$ to the weak equivalence $\underline{E}G\rightarrow G/G$ we obtain 
a stable proper decomposition

\[  f: X=\mathrm{hocolim}_n X^n \xrightarrow{\cong} S^0 \] 
in $\mathrm{Ho}(\Gspec)$. Just as in the previous section, $ X$ gives rise to a free resolution (possibly of infinite length) 
\[   \ldots \rightarrow  \pi^{-}_{n}( X^n/ X^{n-1}) \xrightarrow{d_n} \ldots   \xrightarrow{d_2}\pi^{-}_{1}( X^{1}/ X^{0})  \xrightarrow{d_1}  \pi^{-}_{0}( X^{0})  \xrightarrow{d_0} \underline{A}\rightarrow 0       \]
of $\underline{A}$ in $\mathrm{Mack}_{\mathcal{F}}G$.  Since $\underline{\mathrm{cd}}_{\mathcal{M}}(G)=m$, it follows from standard techniques in homological algebra that $\ker d_{m-1}$ is a projective Mackey functor. By the Eilenberg swindle there exists a free Mackey functor 
\[ F'=\bigoplus_{j \in J} \mathbb{Z}^{G}[-,H_j] \] such that $F=F'\oplus \ker d_{m-1}$ is a free Mackey functor. Hence, by replacing $ X^{m-1}$ with \[ X^{m-1}\vee \big(  \bigvee_{j \in J} \Sigma^{m-1}G/H_{j \ +} \big)\] 
and letting $f( \bigvee_{j \in J} \Sigma^{m-1}G/H_{j \ +} )=\ast$,
we may assume that $\ker{d_{m-1}}$ is a free Mackey functor $F$ so that we obtain a free resolution of length $m$
\begin{equation}  \label{eq: freeres1} 0\rightarrow F \rightarrow  \pi^{-}_{m-1}( X^{m-1}/ X^{m-2}) \xrightarrow{d_{m-1}} \ldots   \xrightarrow{d_2}\pi^{-}_{1}( X^{1}/ X^{0})  \xrightarrow{d_1}  \pi^{-}_{0}( X^{0})  \xrightarrow{d_0} \underline{A}\rightarrow 0       \end{equation}
of $\underline{A}$ in $\mathrm{Mack}_{\mathcal{F}}G$ and a map
\[   f:  X^{m-1} \rightarrow S^{0}  \]
that is $(m-1)$-connected. Here, $(m-1)$-connected means that for all $K \in \mathcal{F}$, $\pi^{K}_{i}(f)$ is an isomorphism for all $i\leq m-2$, while $\pi^{K}_{m-1}(f)$ is surjective. For every $-1\leq n\leq m-1$, the $G$-spectrum $ X^{n}$ fits into a stable cofiber sequence
\[      X^{n} \xrightarrow{f} S^{0} \rightarrow  Y^{n+1} \rightarrow \Sigma  X^{n}.   \]
Using the octahedral axiom for triangulated categories, we deduce the existence of the dotted arrows in the commutative diagram
\begin{equation} \label{octa}   \xymatrix{ X^{n-1} \ar[r] \ar@{=}[d]  &  X^n \ar[r]^{\pi \ \ \ \ \ \ \ \ \ \  \ \ } \ar[d]^{f} & \bigvee_{\alpha \in I_n} \Sigma^{n} G/H_{\alpha \ +} \ar[r] \ar@{.>}[d] & \Sigma  X^{n-1} \ar@{=}[d] \\
 X^{n-1} \ar[r]^{f} & S^{0} \ar[r]  \ar[d] &  Y^n \ar@{.>}[d]^{\alpha_n} \ar[r]^{\partial} & \Sigma X^{n-1} \ar[d] \\
&  Y^{n+1} \ar[d]^{\partial} \ar@{=}[r] &  Y^{n+1} \ar[r]^{\partial}  \ar[d]^{\Sigma \pi \circ \partial}& \Sigma  X^{n} \\
& \Sigma X^{n} \ar[r]^{\Sigma \pi \ \ \ \ \ \ \ \ \ \ \ \ } & \bigvee_{\alpha\in I_n} \Sigma^{n+1} G/H_{\alpha \ +}, &  }\end{equation}
where the two upper horizatonal lines and the two middle vertical lines are stable cofiber sequences. Note that $ Y^0=S^{0}$ and that by rotating, we obtain for each $0\leq n\leq m-1$, a stable cofiber sequence
\[     Y^{n} \xrightarrow{\alpha_n}  Y^{n+1} \xrightarrow{\Sigma\pi\circ\partial}\bigvee_{\alpha\in I_n} \Sigma^{n+1} G/H_{\alpha \ +} \rightarrow \Sigma  Y^n. \]
As before, these cofiber sequences give rise to a chain complex 
\begin{equation*}  0 \rightarrow   \pi^{-}_{m}( Y^{m}/ Y^{m-1}) \rightarrow \ldots   \rightarrow\pi^{-}_{1}( Y^{1}/ Y^{0})  \rightarrow  \underline{A}=\pi^{-}_{0}( Y^{0})  \rightarrow 0       \end{equation*}
in $\mathrm{Mack}_{\mathcal{F}}G$ where the  $\pi^{-}_{n}( Y^{n}/ Y^{n-1}) $ are free Mackey functors for $n\geq 1$. Using the same stable Hurewicz argument as in Subsection \ref{easy}, one deduces that this chain complex is naturally isomorphic to 
\[   0\rightarrow   \mathrm{H}^{-}_{m}( Y^{m}/ Y^{m-1}) \rightarrow \ldots   \rightarrow \mathrm{H}^{-}_{1}( Y^{1}/ Y^{0})  \rightarrow  \underline{A}  \rightarrow 0.       \]
Moreover, for each $K \in \mathcal{F}$ this chain complex evaluated at $K$ computes $\mathrm{H}^{K}_{\ast}( Y^m)$. Since the map $f:  X^{m-1} \rightarrow S^{0}$ is $(m-1)$-connected, standard long exact sequence arguments imply that $\pi^{-}_{n}( Y^m)=0$ for all $n\leq m-1$. An application of the stable Hurewicz theorem (see \cite[Th. 2.1]{Lewis1}) then yields that for each $K \in \mathcal{F}$, $\mathrm{H}^{K}_{n}( Y^m)=0$ for all $n\leq m-1$ and the Hurewicz map
\[  \pi^{K}_{m}( Y^m) \rightarrow \mathrm{H}^{K}_{m}( Y^m)     \]
is an isomorphism. Moreover, the commutativity of (\ref{octa}) implies that there is an isomorphism of chain complexes
\begin{equation} \label{eq: isochain} \xymatrix{0 \ar[r] & \pi^{-}_{m}( Y^m)  \ar[r] & \pi^{-}_{m}( Y^{m}/ Y^{m-1}) \ar[r]  & \ldots \ar[r] &    \pi^{-}_{1}( Y^{1}/ Y^{0})  \ar[r] & \underline{A} \ar[r] &   0   \\
0 \ar[r] &  F \ar[u]^{\cong} \ar[r] &  \pi^{-}_{m-1}( X^{m-1}/ X^{m-2}) \ar[r] \ar[u]^{\cong} & \ldots \ar[r]  &    \pi^{-}_{0}( X^{0})  \ar[r] \ar[u]^{\cong} & \underline{A} \ar[r] \ar[u]^{\mathrm{Id} }&   0.   }\end{equation}
Since $F$ is a free Mackey functor, we have
\[  F \cong \bigoplus_{\alpha \in I_m} \mathbb{Z}^{G}[-,H_{\alpha}],   \]
so the isomorphism $F \cong \pi^{-}_{m}( Y^m) $ yields a map
\begin{equation} \label{eq: attach}    \bigvee_{\alpha \in I_m} \Sigma^m G/H_{\alpha \ +} \rightarrow  Y^m  \end{equation}
that induces an isomorphism on $\pi_m^{-}$. Now consider the composite
\[     \bigvee_{\alpha \in I_m} \Sigma^m G/H_{\alpha \ +} \rightarrow  Y^m \xrightarrow{\partial} \Sigma X^{m-1}\]
and desuspend it, by applying $\Sigma^{-1}$, to obtain the map
\[      \bigvee_{\alpha \in I_m} \Sigma^{m-1} G/H_{\alpha \ +}   \xrightarrow{\omega}  X^{m-1} . \] 
We now define $ X^m$ to be the $G$-spectrum that fits into a stable cofiber sequence
  \[      \bigvee_{\alpha \in I_m} \Sigma^{m-1} G/H_{\alpha \ +}   \xrightarrow{\omega}  X^{m-1} \rightarrow  X^m \rightarrow   \bigvee_{\alpha \in I_m} \Sigma^{m} G/H_{\alpha \ +} .      \]
This determines $X^m$ uniquely up to non canonical isomorphism. By rotating, we also obtain a stable cofiber sequence
\[       X^{m-1} \rightarrow  X^m \rightarrow   \bigvee_{\alpha \in I_m} \Sigma^{m} G/H_{\alpha \ +}  \rightarrow \Sigma  X^{m-1}.      \]

Now define $Y^{m+1}$ as the mapping cone of the map (\ref{eq: attach}) and use the octahedral axiom for triangulated categories to deduce the existence of the dotted arrows in the diagram
\begin{equation*} \label{octa2}   \xymatrix{ \bigvee_{\alpha \in I_m} \Sigma^{m-1} G/H_{\alpha \ +} \ar[r] \ar@{=}[d]  & \Sigma^{-1} Y^m \ar[r] \ar[d] & \Sigma^{-1}  Y^{m+1}\ar[r] \ar@{.>}[d] & \bigvee_{\alpha \in I_m} \Sigma^{m} G/H_{\alpha \ +} \ar@{=}[d] \\
 \bigvee_{\alpha \in I_m} \Sigma^{m-1} G/H_{\alpha \ +} \ar[r]^{\ \ \ \ \ \ \ \ \ \ \omega} &  X^{m-1} \ar[r]  \ar[d]^{f} &  X^m \ar@{.>}[d]^{\tilde{f}} \ar[r] & \bigvee_{\alpha \in I_m} \Sigma^{m} G/H_{\alpha \ +} \ar[d]^{\partial} \\
&S^{0}\ar[d] \ar@{=}[r] & S^{0}\ar[r] \ar[d] &  Y^m \\
& Y^m \ar[r] &  Y^{m+1},&  }\end{equation*}
where the two upper horizatonal lines and the two middle vertical lines are stable cofiber sequences. So we obtain an $m$-dimensional proper $G$-CW spectrum $ X^m$ together with a map
\[   \tilde{f}:  X^m \rightarrow S^{0}  \]
that extends $f$. We claim that $\pi^{K}_{\ast}(f)$ is an isomorphism for every $K \in \mathcal{F}$. Proving this is equivalent to showing that $\pi^{K}_{\ast}( Y^{m+1})=0$ for all $K \in \mathcal{F}$. Since $ Y^{m+1}$ is connective, it follows from the stable Hurewicz isomorphism \cite[Th. 2.1]{Lewis1} that this is equivalent to proving that $\mathrm{H}^{K}_{\ast}( Y^{m+1})=0$ for all $K \in \mathcal{F}$. If we evaluate at $K \in \mathcal{F}$, then this homology is computed by the chain complex
\begin{equation}\label{eq: almostdone} 0\rightarrow \mathrm{H}^{K}_{m+1}( Y^{m+1}/ Y^{m}) \xrightarrow{d}   \mathrm{H}^{K}_{m}( Y^{m}/ Y^{m-1}) \rightarrow \ldots   \rightarrow \mathrm{H}^{K}_{1}( Y^{1}/ Y^{0})  \rightarrow   \mathrm{H}^{K}_{0}( Y^{0})    \rightarrow 0.\end{equation}
We already know this sequence is exact up to degree $m-1$. The map $d$ is constructed as the composition
\[   d:  \mathrm{H}^{K}_{m+1}( Y^{m+1}/ Y^{m}) \xrightarrow{\partial_{\ast}}  \mathrm{H}^{K}_{m}( Y^{m})  \rightarrow  \mathrm{H}^{K}_{m}( Y^{m}/ Y^{m-1})  \]
By the stable Hurewicz isomorphism, (\ref{eq: attach}) and the exactness of (\ref{eq: isochain}), we conclude that (\ref{eq: almostdone}) is exact. This proves the claim, so we may conclude that $ X^m$ is an $m$-dimensional stable model for $\underline{E}G$, hence
\[      \underline{\mathrm{cd}}_{\mathcal{M}}(G)  \geq  \underline{\mathrm{gd}}_{\mathrm{st}}(G).   \]
\qed

\subsection{Suspension models} \label{sec: suspension}
We have already seen that if a proper $G$-CW-complex $X$ is a model for $\underline{E}G$, then its suspension spectrum $\Sigma^{\infty}X_{+}$ is a stable model for $\underline{E}G$. The following proposition shows that the converse of this statement also holds if one additionally assumes that $X$ is \emph{$\mathcal{F}$-simply connected}, meaning that $X^K$ is simply connected for every $K \in \mathcal{F}$.  This additional assumption is a necessary one. Indeed, in Section 6 we will give an example of a group $G$ and proper $G$-CW-complex $X$ such that $X$ is not $\mathcal{F}$-simply connected and $\Sigma^{\infty}X_+$ is a stable model for $\underline{E}G$.

\begin{proposition}\label{prop: susp model} Let $X$ be an $\mathcal{F}$-simply connected proper $G$-CW complex. Then $\Sigma^{\infty}X_{+}$ is a stable model for $\underline{E}G$ if and only if $X$ is a model for $\underline{E}G$. 
\end{proposition}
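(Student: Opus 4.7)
The "if" direction is already noted in the discussion following Remark \ref{remark: terminology}, so the task is to prove the converse. Assume $\Sigma^{\infty}X_+ \cong S^0$ in $\mathrm{Ho}(\Gspec)$ and fix $H \in \mathcal{F}$. The goal is to show that $X^H$ is contractible; since $X^H$ is simply connected by hypothesis, Hurewicz and Whitehead reduce this to showing $\tilde{H}_\ast(X^H;\mathbb{Z})=0$.

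My plan is to pass from proper $G$-spectra to ordinary spectra in two steps: first restrict to the finite subgroup $H$, then apply geometric $H$-fixed points. The restriction functor $\mathrm{res}^G_H \colon \Gspec \to \Hspec$ preserves stable equivalences, since weak equivalences on both sides are detected on $\pi^K_\ast$ for $K \in \mathcal{F}$ and every subgroup of $H$ lies in $\mathcal{F}$. Hence $\Sigma^{\infty}_H(\mathrm{res}^G_H X)_+ \cong \mathrm{res}^G_H \Sigma^{\infty}_G X_+ \cong S^0$ in $\mathrm{Ho}(\Hspec)$. Because $H$ is finite, $\mathrm{Ho}(\Hspec)$ coincides with the classical genuine $H$-equivariant stable homotopy category of \cite{MandellMay}, so the geometric fixed point functor $\Phi^H$ is available and descends to a functor $\mathrm{Ho}(\Hspec) \to \mathrm{Ho}(\mathrm{Sp})$ satisfying the standard identities $\Phi^H(\Sigma^{\infty}_H Y_+) \simeq \Sigma^{\infty}(Y^H)_+$ for any based $H$-space $Y$ and $\Phi^H(S^0)\simeq S^0$.

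Applying $\Phi^H$ to the isomorphism $\Sigma^{\infty}_H(\mathrm{res}^G_H X)_+ \cong S^0$ then yields a stable equivalence $\Sigma^{\infty}(X^H)_+ \simeq S^0$ in $\mathrm{Ho}(\mathrm{Sp})$. This means that the collapse $X^H \to \mathrm{pt}$ is a stable equivalence and in particular an integral homology isomorphism, whence $\tilde{H}_\ast(X^H;\mathbb{Z})=0$. Combined with the $\mathcal{F}$-simple-connectedness of $X$, this forces $X^H$ to be contractible, and since $H \in \mathcal{F}$ was arbitrary, $X$ is a model for $\underline{E}G$. I expect the main obstacle to be the clean verification that the restriction of a proper $G$-spectrum along a finite subgroup $H\leq G$ fits into the classical framework in which $\Phi^H$ is constructed and that $\Phi^H$ has the expected behavior on suspension spectra and on $S^0$; once this foundational identification is in hand, the rest of the argument is routine.
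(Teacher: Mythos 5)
Your proof is correct and follows essentially the same route as the paper: restrict along $H\in\mathcal{F}$, apply geometric fixed points $\Phi^H$ with the identification $\Phi^H(\Sigma^{\infty}X_+)\cong\Sigma^{\infty}X^H_+$ from \cite[Corollary 4.6]{MandellMay}, and conclude by Hurewicz and Whitehead using the $\mathcal{F}$-simple-connectedness hypothesis. The only difference is that you spell out the (correct) verification that restriction to a finite subgroup lands in the classical genuine $H$-equivariant category and preserves weak equivalences, which the paper leaves implicit.
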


\begin{proof} Suppose that we are given an isomorphism $\Sigma^{\infty}X_{+}\cong S^0$ in $\mathrm{Ho}(\Gspec)$. This implies that for any $H \in \mathcal{F}$, the geometric fixed point spectrum (see \cite[Section V.4]{MandellMay})
\[ \Phi^H (\Sigma^{\infty}X_{+})=\Phi^H (\mathrm{res}^G_H(\Sigma^{\infty}X_{+})) \]
is stably equivalent to $S^0$. By \cite[Corollary 4.6]{MandellMay}, one has
\[\Phi^H (\Sigma^{\infty}X_{+}) \cong \Sigma^{\infty}X^H_{+} \]
which shows that $\Sigma^{\infty}X^H_{+}$ is stably equivalent to the sphere spectrum $S^0$. Since $X^H$ is simply connected, the Whitehead and Hurewicz theorems imply that $X^H$ is contractible. 

\end{proof}

Note that for every based $G$-space $Y$ and every $n\geq 0$, there is a natural transformation
\[   \pi^{-}_{n+2}(\Sigma^2Y) \rightarrow  \mathrm{res}_{\pi}(\pi^{-}_n(\Sigma^{\infty}Y)) \]
of right $\orb$-modules. At $K \in \mathcal{F}$, this map is given by mapping the homotopy class of a $K$-map $f: S^{n+2} \rightarrow S^2 \wedge Y$ to its image in the colimit $\colim_{V \subset \mathcal{U}} [S^{V+n}, S^V\wedge Y]^K$
over a complete universe $\mathcal{U}$ of representations of $K$. Let $X$ be a proper $G$-CW-complex with associated homotopy cofiber sequences 
\[   X^{n-1}_{+} \rightarrow X^{n}_{+} \rightarrow  X^n/X^{n-1} \rightarrow \Sigma X^{n-1}_+. \]
By suspending these homotopy cofiber sequences twice and noting that the suspension gives an isomorphism
\[    \pi^{-}_{n+1}(\Sigma^2 (X^{n-1}/X^{n-2})) \cong   \pi^{-}_{n+2}(\Sigma^3 (X^{n-1}/X^{n-2}))  \]
for every $n\geq 1$, we obtain the chain complex of right $\orb$-modules
\[ \rightarrow  \pi^{-}_{n+2}(\Sigma^2 (X^n/X^{n-1})) \rightarrow  \pi^{-}_{n+1}(\Sigma^2 (X^{n-1}/X^{n-2})) \rightarrow \ldots  \rightarrow  \pi^{-}_{2}(\Sigma^2 X^0_+) \rightarrow 0.  \]
By the Hurewicz theorem, this chain complex is isomorphic to the cellular $\orb$-chain complex of $X$ (see beginning of Section $4$)

\[   \rightarrow C_n(X^{-})  \xrightarrow{d_n}  C_{n-1}(X^{-}) \rightarrow \ldots \rightarrow C_{0}(X^{-}) \rightarrow  0. \]
Now the natural transformation discussed above entails a commutative diagram
\[\xymatrix{ C_n(X^-) \ar[d]\ar[r]^{d_n} &	C_{n-1}(X^-)\ar[d] \\
\mathrm{res}_{\pi}(\pi_n^{-}( \Sigma^{\infty}(X^n/ X^{n-1}))) \ar[r]^{d_n \ \ \ } &	\mathrm{res}_{\pi}(\pi_{n-1}^{-}( \Sigma^{\infty}(X^{n-1}/ X^{n-2}))). }\] 
Using the adjointness of $\mathrm{res}_{\pi}$ and $\mathrm{ind}_{\pi}$ and  $\mathrm{ind}_{\pi}(\mathbb{Z}[-,G/K])=\mathbb{Z}^{G}[-,K]$, we conclude that the chain complex obtained from $ \Sigma^{\infty}X_+$ by applying the methods from Subsection 4.1 to the stable cofiber sequences obtained by applying $\Sigma^{\infty}$ to the homotopy cofiber sequences of $X$, coincides with the chain complex obtained by applying the induction functor $\mathrm{ind}_{\pi}$ to the cellular chain complex $C_{\ast}(X^-)$ of $X$.

The above discussion shows that the suspension spectrum functor is a geometric analog of the induction functor $\mathrm{ind}_{\pi}$. This indicates that there should be an algebraic version of Proposition \ref{prop: susp model}. Indeed, \cite[Th. 3.8]{MartinezNucinkis06} shows that if $P_{\ast}$ is a projective resolution of $\underline{\mathbb{Z}}$, then $\mathrm{ind}_{\pi}(P_{\ast})$ is a projective resolution of $\underline{A}$. Below we show that the converse of the latter is also true. The proof requires the following lemma and uses notation and isomorphisms from Section 2.

\begin{lemma}\label{lemma: tensor zero} For any right $\orb$-module $M$ and any $K \in \mathcal{F}$,  
$\mathrm{ind}_{\pi}(M)(G/K)=0$ implies that $M(G/K)=0$.

\end{lemma}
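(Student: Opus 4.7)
The strategy is to exhibit $M(G/K)$ as a natural direct summand of $\mathrm{ind}_\pi(M)(G/K)$; the lemma then follows at once. By \eqref{eq: ind},
\[ \mathrm{ind}_\pi(M)(G/K) \ = \ M \otimes_{\orb} \Z^G[G/K,\pi(-)], \]
where $\Z^G[G/K,\pi(-)]$ is the left $\orb$-module sending $G/L$ to $\Z^G[G/K,G/L]$, with action given by post-composition. The covariant analogue of \eqref{eq: tensor with free} identifies $M \otimes_{\orb} \Z[G/K,-]$ with $M(G/K)$, so it suffices to split the canonical inclusion of left $\orb$-modules
\[ \iota : \Z[G/K,-] \hookrightarrow \Z^G[G/K,\pi(-)], \qquad \iota(x) \ = \ [\,G/K\xleftarrow{\mathrm{Id}} G/K \xrightarrow{x} G/L\,], \]
obtained by applying $\pi$ to a $G$-map $x : G/K \to G/L$.

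I would define the candidate retraction $r$ on a basic span $s = [\,G/K \xleftarrow{p} G/J \xrightarrow{q} G/L\,]$ (middle object a single orbit, $J \in \mF$) by
\[ r(s) \ = \ \begin{cases} q \circ p^{-1} & \text{if } p \text{ is a } G\text{-isomorphism,} \\ 0 & \text{otherwise,} \end{cases} \]
extended $\Z$-linearly. Three verifications are then needed: (a) \emph{well-definedness on equivalence classes of spans}, since a $G$-iso $\sigma : G/J \to G/J'$ intertwining the two legs preserves the property ``left leg is an isomorphism'', and in that case $q'\circ(p')^{-1} = q\sigma^{-1}\circ(p\sigma^{-1})^{-1} = q \circ p^{-1}$; (b) \emph{left $\orb$-linearity}, because for any morphism $\alpha : G/L_1 \to G/L_2$ in $\orb$ the Mackey-category composite $\pi(\alpha) \circ s$ is computed by a pullback along an identity (hence trivial) and equals $[\,G/K \xleftarrow{p} G/J \xrightarrow{\alpha q} G/L_2\,]$, so the left leg $p$ is preserved and $r(\alpha \cdot s) = \alpha \circ r(s)$ in both cases of the dichotomy; (c) \emph{the splitting relation} $r \circ \iota = \mathrm{Id}$, which is immediate since the left leg of $\iota(x)$ is $\mathrm{Id}_{G/K}$.

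With $r$ in hand one has $\Z^G[G/K,\pi(-)] \cong \Z[G/K,-] \oplus \ker(r)$ as left $\orb$-modules, and applying the additive bifunctor $M \otimes_{\orb} (-)$ yields
\[ \mathrm{ind}_\pi(M)(G/K) \ \cong \ M(G/K) \ \oplus \ \bigl(M \otimes_{\orb} \ker(r)\bigr). \]
Hence $\mathrm{ind}_\pi(M)(G/K) = 0$ forces $M(G/K) = 0$, proving the lemma. The one mildly delicate point is (b): what makes $r$ compatible with the left $\orb$-action is precisely that this action is post-composition by $\pi$-images of orbit-category morphisms, whose associated spans have identity as left leg, so the Mackey pullback is trivial and the ``left leg is an isomorphism'' dichotomy used to define $r$ is preserved; a generic Mackey-category morphism would produce a non-trivial pullback that could break this dichotomy.
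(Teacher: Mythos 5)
Your proof is correct, but it takes a more direct route than the paper's. The paper first base-changes to the finite group $K$: using the double coset formulas of \cite[Proposition 3.1]{MartinezNucinkis06} it identifies $\mathrm{ind}_{\pi}(M)(G/K)$ with $M(i_K^G(-))\otimes_{\mathcal{O}_{\mathcal{F}}K}\Z^K[K,\pi_K(-)]$, and then only needs that $\Z[K/K,-]$ is a direct summand of $\Z^K[K,\pi_K(-)]$ over the orbit category of the \emph{finite} group $K$ --- a statement that is transparent there because $\Z[K/K,-]$ is concentrated at the object $K/K$ with value $\Z$, and the complement is spanned by the spans whose middle term is a non-trivial $K$-orbit. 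You instead split $\Z[G/K,-]$ off $\Z^G[G/K,\pi(-)]$ globally as left $\orb$-modules, which requires the explicit retraction ``keep a span iff its left leg is invertible'' together with your verification (b) that this dichotomy is preserved by post-composition with $\pi$-images of orbit-category maps --- the crucial point being, as you say, that the relevant Mackey pullbacks are along identities. The two arguments rest on the same underlying splitting idea, but yours avoids the restriction/induction machinery and the double coset formula entirely, at the cost of checking the $\orb$-linearity of $r$ by hand; the paper outsources that work to the finite-group case, where the check is essentially vacuous. Both are complete, and yours is arguably the more self-contained.
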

\begin{proof} Fix $K \in \mathcal{F}$ and let $\mathrm{ind}_K^G$ denote the induction functor from the category of covariant Mackey functors for $K$ to the category of covariant Mackey functors for $G$, associated to the inclusion $i_K^G$ of $K$ into $G$.
Then $$\mathrm{ind}_K^G(\mathbb{Z}^K[K,\pi_K(-)])\cong \mathbb{Z}^{G}[K,\pi_G(-)]$$ where $\pi=\pi_G$ and $\pi_K  $ are the functors defined in (\ref{eq: pi functor}). This can be seen by writing out the definition of the induction functor, noting that
\[      \mathrm{res}_K^G \Big ( \Z[-,G/L]\Big)\cong \bigoplus_{[g]\in K \setminus G /L} \Z[-,K/ K\cap {}^{g}L]    \]
and

\[      \mathrm{res}_K^G\Big ( \Z^G[-,L]\Big)\cong \bigoplus_{[g]\in K \setminus G /L} \Z^K[-,K\cap {}^{g}L]    \]
(see \cite[Proposition 3.1]{MartinezNucinkis06}) for any finite subgroup $L$ of $G$ and using (\ref{eq: tensor with free}). Using this isomorphism, we conclude that

\[ 	\mathrm{ind}_{\pi}(M)(G/K)=M(-)\otimes_{\orb} \mathbb{Z}^{G}[K,\pi_G(-)] \cong M(i_K^G(-))\otimes_{\mathcal{O}_{\mathcal{F}}K} \mathbb{Z}^{K}[K,\pi_K(-)].    \] 
Since $\mathbb{Z}[K/K,-]$ is easily seen to be a direct summand of $\mathbb{Z}^{K}[K/K,\pi(-)]$ as left $\mathcal{O}_{\mathcal{F}}K$-modules, it follows that 	
$\mathrm{ind}_{\pi}(M)(G/K)=0$ implies that \[ M(G/K)=M(i_K^G(-))\otimes_{\mathcal{O}_{\mathcal{F}}K} \mathbb{Z}[K/K,-]=0.\] 
\end{proof}

The following proposition provides the converse of \cite[Th. 3.8]{MartinezNucinkis06}.
\begin{proposition} \label{lemma: hom zero}Let $P_{\ast}$ be a non-negative chain complex of projective right $\orb$-modules such that $\mathrm{H}_{0}(P_{\ast})=\underline{\mathbb{Z}}$. Then $P_{\ast}$ is a projective resolution of $\underline{\mathbb{Z}}$ if  $\mathrm{ind}_{\pi}(P_{\ast})$ is a projective resolution of $\underline{A}$.

\end{proposition}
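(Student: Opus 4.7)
The plan is to upgrade Lemma~\ref{lemma: tensor zero} from a vanishing statement about individual modules to a statement about exactness of chain complexes. The crucial observation is that the proof of that lemma actually produces, for each $K\in\mathcal{F}$, a natural direct summand $M(G/K)\hookrightarrow \mathrm{ind}_\pi(M)(G/K)$. Indeed, the isomorphism
\[ \mathrm{ind}_\pi(M)(G/K) \cong M(i_K^G(-))\otimes_{\mathcal{O}_{\mathcal{F}}K}\mathbb{Z}^K[K,\pi_K(-)] \]
is functorial in $M$, and the splitting $\mathbb{Z}[K/K,-]\hookrightarrow \mathbb{Z}^K[K,\pi_K(-)]$ of left $\mathcal{O}_{\mathcal{F}}K$-modules does not involve $M$, so tensoring with $M\circ i_K^G$ preserves it; the summand of $\mathrm{ind}_\pi(M)(G/K)$ thereby picked out is exactly $M(G/K)$.

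Once this refinement is in place, the rest of the argument is formal. First I would apply the direct summand decomposition levelwise to the chain complex $P_\ast$, yielding, for each $K\in\mathcal{F}$, a natural splitting
\[ \mathrm{ind}_\pi(P_\ast)(G/K) \cong P_\ast(G/K)\oplus Q_\ast(G/K) \]
of chain complexes of abelian groups. Next, since by hypothesis $\mathrm{ind}_\pi(P_\ast)$ is a projective resolution of $\underline{A}$, the complex $\mathrm{ind}_\pi(P_\ast)(G/K)$ is exact in positive degrees (exactness in $\mathrm{Mack}_{\mathcal{F}}G$ being detected pointwise). A direct summand of an exact complex of abelian groups is itself exact, hence $P_\ast(G/K)$ is exact in positive degrees, for every $K\in\mathcal{F}$. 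This forces $H_n(P_\ast)=0$ for all $n\geq 1$, and combined with the assumption $H_0(P_\ast)=\underline{\mathbb{Z}}$ one concludes that $P_\ast$ is a projective resolution of $\underline{\mathbb{Z}}$.

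The only subtle point is verifying that the direct summand decomposition extracted from the proof of Lemma~\ref{lemma: tensor zero} is genuinely natural in $M$, so that it propagates to a splitting of chain complexes. I expect this to be the principal obstacle, but re-reading the argument shows that the splitting of $\mathbb{Z}^K[K,\pi_K(-)]$ is intrinsic to the target and involves no choice depending on $M$, so naturality is immediate. No additional input from the theory of Mackey functors is required.
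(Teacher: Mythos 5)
Your argument is correct, but it takes a genuinely different route from the paper. The paper's proof sets up the hyperhomology spectral sequence $E^2_{p,q}=\mathrm{Tor}^{\orb}_p(\mathrm{H}_q(P_{\ast}),M)\Rightarrow \mathrm{H}_{p+q}(P_{\ast}\otimes_{\orb}M)$ for $M=\mathbb{Z}^G[K,\pi(-)]$, feeds in the vanishing of $\mathrm{Tor}^{\orb}_{n}(\underline{\mathbb{Z}},\mathbb{Z}^{G}[K,\pi(-)])$ for $n>0$ (which rests on \cite[Th. 3.8]{MartinezNucinkis06} together with the adjunction (\ref{eq: tensor adj})), and then kills the rows one at a time by induction, invoking Lemma \ref{lemma: tensor zero} at each stage. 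You instead observe that the proof of Lemma \ref{lemma: tensor zero} gives more than a vanishing statement: the splitting of $\mathbb{Z}^{K}[K,\pi_K(-)]$ off of which everything hangs is a splitting of the coefficient module alone, so tensoring with $M\circ i_K^G$ produces a retraction of $\mathrm{ind}_{\pi}(M)(G/K)$ onto $M(G/K)$ that is natural in $M$ and therefore passes to a levelwise splitting of chain complexes; exactness of $P_{\ast}(G/K)$ in positive degrees then follows since a direct summand of an acyclic complex of abelian groups is acyclic. Your naturality claim does hold: the identification $\mathrm{ind}_{\pi}(M)(G/K)\cong M(i_K^G(-))\otimes_{\mathcal{O}_{\mathcal{F}}K}\mathbb{Z}^K[K,\pi_K(-)]$ is a composite of isomorphisms natural in $M$, and the inclusion and projection of the summand are of the form $\mathrm{id}_M\otimes(-)$, so they commute with the differentials $\mathrm{ind}_{\pi}(d)$. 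Your argument is shorter, avoids the spectral sequence and the appeal to \cite{MartinezNucinkis06}, and does not actually use projectivity of $P_{\ast}$ to get exactness (projectivity only enters in calling the result a projective resolution). What the paper's route buys is the spectral sequence (\ref{eq: tor-spec}) itself as a reusable tool; what yours buys is a sharpened, functorial form of Lemma \ref{lemma: tensor zero}, which would be worth stating explicitly.
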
	
\begin{proof} First, let $Q_{\ast}$ be a resolution of $M$ consisting of projective left $\orb$-modules. Then one can consider the double complex $C=P_{\ast}\otimes_{\orb}Q_{\ast} $ whose column and row filtrations give rise to two convergent spectral sequences
	\[   E^2_{p,q}=\mathrm{H}^{h}_p\mathrm{H}^v_q(C) \Rightarrow \mathrm{H}_{p+q}(\mathrm{Tot}(C))  \]
	and 
	\[   E^2_{p,q}=\mathrm{H}^{v}_p\mathrm{H}^h_q(C) \Rightarrow \mathrm{H}_{p+q}(\mathrm{Tot}(C)).  \]
Since $P_p$ is projective, $H^{v}_q(P_p\otimes_{\orb}Q_{\ast})=0$ if $q>0$ and $H^{v}_0(P_p\otimes_{\orb}Q_{\ast})=P_{p}\otimes_{\orb}M$. We therefore conclude from the first spectral sequence that $$\mathrm{H}_{p+q}(\mathrm{Tot}(C))=\mathrm{H}_{p+q}(P_{\ast}\otimes_{\orb} M).$$ Since  $Q_p$ is projective, we have $\mathrm{H}^h_q(P_{\ast}\otimes_{\orb}Q_p)=\mathrm{H}_q(P_{\ast})\otimes_{\orb} Q_{p}$. This implies that the $E_2$-term of the second spectral sequence equals $\mathrm{Tor}^{\orb}_p(\mathrm{H}_q(P_{\ast}),M)$. 	We conclude that there is convergent spectral sequence
\begin{equation} \label{eq: tor-spec} E^2_{p,q}=\mathrm{Tor}^{\orb }_p(\mathrm{H}_q(P_{\ast}),M) \Rightarrow \mathrm{H}_{p+q}(P_{\ast}\otimes_{\orb} M).    \end{equation}
Recall that $P_{\ast}$ is a positive chain complex of projective right $\orb$-modules such that $\mathrm{H}_{0}(P_{\ast})=\underline{\mathbb{Z}}$ and such that $\mathrm{ind}_{\pi}(P_{\ast})$ is a projective resolution of $\underline{A}$. We need to prove that $H_n(P_{\ast})(G/K)=0$ for all $n\geq 1$ and all $K \in \mathcal{F}$. To this end, fix $K \in \mathcal{F}$ and let $M$ be the covariant functor $\mathbb{Z}^G[K,-]$ from $\mathcal{M}_{\mathcal{F}}G$ to abelian groups, restricted to the orbit category via $\pi: \orb \rightarrow \mathcal{M}_{\mathcal{F}}G$. Note that $P_{\ast}\otimes_{\orb} M=\mathrm{ind}_\pi(P_{\ast})(G/K)$ and hence $\mathrm{H}_{n}(P_{\ast}\otimes_{\orb} M)=0$ for all $n>0$. Since $\mathrm{ind}_{\pi}$ takes projective resolutions of $\underline{\mathbb{Z}}$ to projective resolutions of $\underline{A}$ (see \cite[Th. 3.8]{MartinezNucinkis06}) and $\mathrm{H}_{0}(P_{\ast})=\underline{\mathbb{Z}}$, it follows from (\ref{eq: tensor adj}) that 
\[   \mathrm{Tor}^{\orb}_{n}(H_0(P_{\ast}),\mathbb{Z}^{G}[K,\pi(-)])\cong  \mathrm{Tor}^{\mathcal{M}_{\mathcal{F}}G}_{n}(\underline{A},\mathbb{Z}^{G}[K,-])=0   \]
for all $n>0$. The spectral sequence (\ref{eq: tor-spec}) now implies that 
\[E^2_{0,1}=\mathrm{ind}_{\pi}\Big(\mathrm{H}_1(P_{\ast})\Big)(G/K)=0.\]
By Lemma \ref{lemma: tensor zero}, we have that $\mathrm{H}_1(P_{\ast})=0$ showing the the entire $q=1$ row  of (\ref{eq: tor-spec}) is zero. This in turn implies that  \[E^2_{0,2}=\mathrm{ind}_{\pi}\Big(\mathrm{H}_2(P_{\ast})\Big)(G/K)=0.\] Continuing inductively, we can deduce that $H_n(P_{\ast})(G/K)=0$ for all $n\geq 1$, as desired.
\end{proof}

Note that by applying Proposition \ref{lemma: hom zero} to the cellular chain complex of $X$, we can give an alternative proof of Proposition \ref{prop: susp model}.

\section{Finite and finite type models}
In this section we relate the finiteness properties of stable models for $\underline{E}G$ to cohomological finiteness properties in the categories $\orbmod$ and $\mathrm{Mack}_{\mathcal{F}}G$, to the compactness of $S^0$ in $\mathrm{Ho}(\Gspec)$ and to finiteness properties of unstable models for $\underline{E}G$.\\

We first consider stable models of finite type.

\begin{theorem} \label{th: finite type} Let $G$ be a discrete group and let $\mathcal{F}$ be its family of finite subgroups. The following are equivalent.
\begin{itemize}
\item[(1)]  There exists a stable model for $\underline{E}G$ of finite type.

\item[(2)] There exists a resolution of $\underline{A}$ in $\mathrm{Mack}_{\mathcal{F}}G$ consisting of finitely generated free modules.
\item[(3)] There exists a resolution of $\underline{\mathbb{Z}}$ in $\orbmod$ consisting of finitely generated free modules.
\item[(4)] There are only finitely many conjugacy classes of finite subgroups in $G$ and the Weyl group $W_G(H)$ of any finite subgroup $H$ of $G$ admits a $\mathbb{Z}[W_G(H)]$-resolution of $\mathbb{Z}$ consisting of finitely generated free modules, i.e.~$W_G(H)$ is of type $FP_{\infty}$.
\end{itemize}
Morever, if one adds to (1), (2),(3), (4) the assumption that for every $H \in \mathcal{F}$, the Weyl-group $W_G(H)=N_G(H)/H$ is finitely presented, then the resulting statements are equivalent to the existence of an unstable model for $\underline{E}G$ of finite type.

\end{theorem}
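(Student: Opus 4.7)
The plan is to prove the cyclic implications $(1) \Rightarrow (2) \Rightarrow (3) \Rightarrow (4) \Rightarrow (1)$, handling the last assertion about unstable models separately via a classical result of L\"uck.

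First, $(1) \Rightarrow (2)$ is essentially free from the construction in Subsection \ref{easy}: given a finite type stable model $\{X^n\}$, the chain complex (\ref{eq: res}) is a free resolution of $\underline{A}$ in $\mathrm{Mack}_{\mathcal{F}}G$, and the finite type hypothesis forces each term $\bigoplus_{i\in I_n}\mathbb{Z}^G[-,H_i]$ to be finitely generated. For $(2) \Rightarrow (1)$, I would adapt the inductive construction of Subsection \ref{sec: hard}: starting from a finitely generated free resolution $F_\ast \to \underline{A}$ in $\mathrm{Mack}_{\mathcal{F}}G$, build $X^n$ stage by stage using the octahedral axiom and the stable Hurewicz theorem to attach finitely many cells at each stage, according to the generators of $F_n$. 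The homotopy colimit $X = \mathrm{hocolim}_n X^n$ is then a finite type stable model for $\underline{E}G$; unlike Subsection \ref{sec: hard}, the process may not terminate, so one must verify that the obstruction Mackey functors $\pi^-_\ast(Y^{n+1})$ from that construction vanish in the limit to conclude $X \cong S^0$ in $\mathrm{Ho}(\Gspec)$.

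For the middle equivalences, $(3) \Rightarrow (2)$ is immediate from \cite[Th. 3.8]{MartinezNucinkis06}: the induction functor $\mathrm{ind}_{\pi}$ preserves exactness of projective resolutions and sends $\mathbb{Z}[-,G/H]$ to $\mathbb{Z}^G[-,H]$, so it converts a finitely generated free resolution of $\underline{\mathbb{Z}}$ in $\orbmod$ into a finitely generated free resolution of $\underline{A}$ in $\mathrm{Mack}_{\mathcal{F}}G$. For $(2) \Rightarrow (4)$, the condition of finitely many conjugacy classes of finite subgroups follows from examining the degree-zero surjection $F_0 \twoheadrightarrow \underline{A}$: the class $[H/H] \in A(H)$ must be hit for every $H \in \mathcal{F}$, which forces $H$ to be subconjugate to one of the finitely many subgroups indexing the basic free summands of $F_0$. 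For the $FP_\infty$ condition on each Weyl group $W_G(H)$, evaluate the Mackey resolution at $G/H$ and exploit the natural $\mathbb{Z}[W_G(H)]$-module structure on $M(G/H)$, extracting a finitely generated free $\mathbb{Z}[W_G(H)]$-resolution of $\mathbb{Z}$ via the standard techniques developed in \cite{MartinezNucinkis06} and \cite[Section 5]{Degrijse}. The remaining implication $(4) \Rightarrow (3)$ is L\"uck's classical characterization of finite type Bredon resolutions (\cite[Section 9]{Luck}).

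For the final statement under the additional hypothesis that each Weyl group is finitely presented, the implication from $(4)$ plus finite presentation to the existence of an unstable finite type model for $\underline{E}G$ is L\"uck's classical theorem characterizing finite classifying spaces for the family of finite subgroups (\cite[Section 9]{Luck}, \cite[Th. 0.1]{LuckMeintrup}); conversely, if $X$ is an unstable finite type model for $\underline{E}G$, then $\Sigma^\infty X_+$ is a stable finite type model by the last paragraph of Section 3, yielding $(1)$, and finite presentation of each $W_G(H)$ is itself implied by the existence of such an $X$ via the identification $X^H \simeq \underline{E}W_G(H)$. The main obstacle in this plan will be $(2) \Rightarrow (1)$: carrying through the inductive construction of Subsection \ref{sec: hard} without a dimension bound requires care to ensure that the successive cell attachments assemble into a genuine stable proper decomposition of $S^0$, and in particular that the connectivity of the cofibers $Y^{n+1}$ increases with $n$ so that the homotopy colimit correctly models the sphere spectrum.
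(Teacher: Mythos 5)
Your proposal is correct and its mathematical core coincides with the paper's: $(1)\Rightarrow(2)$ is read off from the resolution of Subsection \ref{easy}, $(2)\Rightarrow(1)$ runs the cell-attachment machine of Subsection \ref{sec: hard} without a dimension bound, and the unstable statement is delegated to L\"uck(--Meintrup). The differences are organizational. First, note that your announced cycle $(1)\Rightarrow(2)\Rightarrow(3)\Rightarrow(4)\Rightarrow(1)$ is not what you actually prove; what you prove is $(1)\Leftrightarrow(2)$, $(3)\Rightarrow(2)$, $(2)\Rightarrow(4)$, $(4)\Rightarrow(3)$, which happens to be logically complete, but you should state the scheme you really follow. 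Second, the paper disposes of $(2)\Leftrightarrow(3)$ and $(3)\Leftrightarrow(4)$ entirely by citation (\cite[Cor. 3.7]{StGreen} and \cite[Lemma 3.1]{KMPN}), whereas you prove $(3)\Rightarrow(2)$ directly via $\mathrm{ind}_{\pi}$ and attempt $(2)\Rightarrow(4)$ in one step; your conjugacy-class argument (the unit $[H/H]\in A(H)$ can only be hit if $H$ is subconjugate to one of the finitely many $K_i$ indexing $F_0$) is exactly the proof of \cite[Lemma 3.2]{StGreen} and is fine, but the passage from the Mackey resolution evaluated at $G/H$ to a finitely generated free $\mathbb{Z}[W_G(H)]$-resolution of $\mathbb{Z}$ is the least detailed step of your sketch and is precisely the content of the two cited results, so you gain nothing by not quoting them. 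Third, in $(2)\Rightarrow(1)$ the paper does not realize the given resolution $F_\ast$ directly: it starts from an unstable model for $\underline{E}G$ with finitely many orbits of vertices and invokes Schanuel's lemma at each stage to see that the kernel of the partially realized complex is finitely generated. Your variant --- attaching cells according to the generators of $F_n$ --- does work, because an induction shows the realized chain complex agrees with $F_\ast$ through each stage, so the surjection $F_n\twoheadrightarrow\ker d_{n-1}\cong\pi_n^-(Y^n)$ can be realized by a wedge of $n$-cells; you should make this inductive identification explicit (it is where Schanuel would otherwise be needed). You correctly isolate the remaining issue, namely that each $f_n\colon X^n\to S^0$ is $n$-connected so that $\mathrm{hocolim}_n X^n\to S^0$ is a stable equivalence.
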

\begin{proof}  It follows from \cite[Cor. 3.7]{StGreen} that (2) and (3) are equivalent and from  \cite[Lemma 3.1]{KMPN}  that (3) and (4) are equivalent. If $ X$ is a  stable model for $\underline{E}G$ of finite type then proceeding as in Section \ref{easy} yields a resolution
\[ \ldots \rightarrow  \pi^{-}_{m}( X^m/ X^{m-1}) \xrightarrow{d_m} \ldots   \xrightarrow{d_2}\pi^{-}_{1}( X^{1}/ X^{0})  \xrightarrow{d_1}  \pi^{-}_{0}( X^{0})  \xrightarrow{d_0} \underline{A}\rightarrow 0  \]
of $\underline{A}$ in $\mathrm{Mack}_{\mathcal{F}}(G)$ consisting of finitely generated free modules. This proves that (1) implies (2). Now assume that there exists a free resolution of $\underline{A}$ in $\mathrm{Mack}_{\mathcal{F}}G$ consisting of finitely generated free modules. By \cite[Lemma 3.2]{StGreen} we know that $G$ has only finitely many conjugacy classes of finite subgroups. This implies that there exists a model for $\underline{E}G$ whose zero skeleton consists of finitely many orbits. Via the functor $\Sigma^{\infty}(-)_{+}$, we obtain a stable map
\[ f_0:   X^{0}=\bigvee_{i\in I_{0}}G/H_{i \ +} \rightarrow S^{0}   \]
that is $0$-connected and such that $I_{0}$ is finite and $H_{i}  \in \mathcal{F}$ for all $i \in I_0$. We can now inductively apply the procedure of Section \ref{sec: hard} to obtain for each $n\in \mathbb{N}$ a $G$-spectrum $X^n$ that fits into a stable cofiber sequence
\[     X^{n-1} \rightarrow  X^{n} \rightarrow \bigvee_{i\in I_{n}}\Sigma^n G/H_{i \ +} \rightarrow \Sigma  X^{n-1}\]
with $I_n$ finite and an $n$-connected map 
\[   f_n : X^n \rightarrow S^{0} \]
that extends $f_{n-1}$. Indeed, suppose $ X^{n-1}$ and $f_{n-1}$ have been constructed. Then we obtain an exact sequence
\[   0\rightarrow \ker d_{n-1} \rightarrow  \pi^{-}_{n-1}( X^{n-1}/ X^{n-2}) \xrightarrow{d_{n-1}} \ldots   \xrightarrow{d_2}\pi^{-}_{1}( X^{1}/ X^{0})  \xrightarrow{d_1}  \pi^{-}_{0}( X^{0})  \xrightarrow{d_0} \underline{A}\rightarrow 0 .      \]
Since there exists a free resolution of $\underline{A}$ in $\mathrm{Mack}_{\mathcal{F}}G$ consisting of finitely generated free modules, an application of Schanuel's lemma (e.g.~see \cite[Lemma VIII.4.4]{brown}) in this setting shows that $\ker d_{n-1} $ is finitely generated, i.e.~there exists a surjection of Mackey functors
\[    F= \bigoplus_{i \in I_{n}} \mathbb{Z}^G[-,H_i ] \rightarrow \ker d_n \]
where $I_n$ is finite and $H_i \in \mathcal{F}$ for all $i \in I_n$. This gives us an exact sequence of Mackey functors
\[   F \rightarrow  \pi^{-}_{n-1}( X^{n-1}/ X^{n-2}) \xrightarrow{d_{n-1}} \ldots   \xrightarrow{d_2}\pi^{-}_{1}( X^{1}/ X^{0})  \xrightarrow{d_1}  \pi^{-}_{0}( X^{0})  \xrightarrow{d_0} \underline{A}\rightarrow 0       \]
If we now apply the procedure of Section \ref{sec: hard} starting from (\ref{eq: freeres1}), one checks that we obtain the desired $G$-spectrum $X^n$. Since each map $f_n$ is $n$-connected, we have
\[     \mathrm{hocolim}_n X^n \cong S^0.   \]

In conclusion, we have constructed a stable model for $\underline{E}G$ that is of finite type. This shows that (2) implies (1), hence (1), (2),(3) and (4) are all equivalent. The final statement of the theorem follows from what is already proven and \cite[Th. 0.1]{LuckMeintrup}.
\end{proof}
\begin{remark} \rm
	The assumption of finite presentability above is necessary. Indeed, Bestvina and Brady (see \cite[Ex. 6.3.]{BestvinaBrady}) have famously constructed examples of torsion-free groups $G$ that are not finitely presented but do admit a finite length resolution of $\mathbb{Z}$ consisting of finitely generated free $\mathbb{Z}[G]$-modules. Such groups admit stable models for $\underline{E}G$ of finite type (even finite), but they do not admit an unstable finite type model for $\underline{E}G$ since that would imply that they are finitely presented. 
\end{remark}

Next, we determine when $S^0$ is a compact object in $\mathrm{Ho}(\Gspec)$. We start with the following easy and well-known lemma. Since the proof is short, we include it for completeness.
\begin{lemma}\label{lemma: compact} If $S^0$ is a compact object in $\mathrm{Ho}(\Gspec)$, then
	\[  \colim_n [S^{0}, X^n]^G \xrightarrow{\cong} [S^{0},\mathrm{hocolim}_n  X^n]^G   \]
	for any a sequence of maps
	 \[    X^0 \rightarrow  X^1 \rightarrow  X^2 \rightarrow \ldots \rightarrow  X^n \rightarrow \ldots  \]
	 in $\mathrm{Ho}(\Gspec)$. 
	
\end{lemma}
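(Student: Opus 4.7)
The plan is to realize the homotopy colimit via the standard mapping telescope cofiber sequence and then apply $[S^0,-]^G$ together with compactness.

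First, I would recall that for any sequence $X^0 \to X^1 \to \cdots$ in a triangulated category with countable coproducts, there is a distinguished triangle
\[
   \bigvee_{n\geq 0} X^n \xrightarrow{\;1-\mathrm{shift}\;} \bigvee_{n\geq 0} X^n \longrightarrow \mathrm{hocolim}_n X^n \longrightarrow \Sigma \bigvee_{n\geq 0} X^n,
\]
where $\mathrm{shift}$ is the sum over $n$ of the structure maps $X^n \to X^{n+1}$ composed with the canonical inclusions. Since $\mathrm{Ho}(\Gspec)$ is a triangulated category with arbitrary coproducts (see Section 3), this telescope presentation is available.

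Next, I would apply the cohomological functor $[S^0,-]^G$. Since $S^0$ is assumed to be compact, the canonical maps
\[
  \bigoplus_n [S^0, X^n]^G \xrightarrow{\cong} \Big[S^0, \bigvee_n X^n\Big]^G, \qquad \bigoplus_n [S^0, \Sigma X^n]^G \xrightarrow{\cong} \Big[S^0, \Sigma \bigvee_n X^n\Big]^G
\]
are isomorphisms (for the second, use that $\Sigma$ commutes with coproducts). Invoking the induced long exact sequence, we obtain an exact sequence of abelian groups
\[
  \bigoplus_n [S^0, X^n]^G \xrightarrow{1-\mathrm{shift}_*} \bigoplus_n [S^0, X^n]^G \longrightarrow [S^0, \mathrm{hocolim}_n X^n]^G \longrightarrow \bigoplus_n [S^0, \Sigma X^n]^G \xrightarrow{1-\mathrm{shift}_*} \bigoplus_n [S^0, \Sigma X^n]^G.
\]

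Finally, I would use the purely algebraic fact that for any sequence of abelian groups $A_0 \to A_1 \to \cdots$, the map $1-\mathrm{shift}\colon \bigoplus_n A_n \to \bigoplus_n A_n$ is injective and its cokernel is naturally isomorphic to $\mathrm{colim}_n A_n$. Applying this to both $A_n = [S^0, X^n]^G$ and $A_n = [S^0, \Sigma X^n]^G$ forces the connecting map into $\bigoplus_n [S^0, \Sigma X^n]^G$ to vanish and identifies $[S^0, \mathrm{hocolim}_n X^n]^G$ with $\mathrm{colim}_n [S^0, X^n]^G$. The main (minor) obstacle is simply verifying naturality of this identification with the canonical comparison map $\mathrm{colim}_n [S^0, X^n]^G \to [S^0, \mathrm{hocolim}_n X^n]^G$, which follows because the latter map factors through the cokernel of $1-\mathrm{shift}_*$ by construction of the telescope.
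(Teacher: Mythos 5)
Your proposal is correct and follows essentially the same route as the paper: the telescope cofiber sequence $\bigvee_n X^n \xrightarrow{1-\mathrm{Sh}} \bigvee_n X^n \to \mathrm{hocolim}_n X^n$, compactness of $S^0$ to pass homs through the wedges, and the injectivity of $1-\mathrm{Sh}$ with cokernel $\mathrm{colim}_n[S^0,X^n]^G$.
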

\begin{proof} Note that (by definition) the homotopy colimit fits into a stable cofiber sequence
	\[    \bigvee_{n\geq 0}  X^{n} \xrightarrow{1-\mathrm{Sh}} \bigvee_{n\geq 0}  X^{n} \rightarrow \mathrm{hocolim}_n  X^n \rightarrow  \bigvee_{n\geq 0}\Sigma X^n,   \]
	where $\mathrm{Sh}$ denotes the  shift-map.
Applying $[S^{0},-]^G$ and using the fact that $S^{0}$ is compact, we obtain a long exact sequence of abelian groups
\[   \bigoplus_{n\geq 0}[S^0, X^n ]^G \xrightarrow{1-\mathrm{Sh}}  \bigoplus_{n\geq 0}[S^0, X^n ]^G \rightarrow [S^{0},\mathrm{hocolim}_n  X^n  ]^G \rightarrow  \bigoplus_{n\geq 0}[S^0,\Sigma X^n ]^G \xrightarrow{1-\mathrm{Sh}}  \bigoplus_{n\geq 0}[S^0,\Sigma X^n ]^G.   \]
Since the maps $1-\mathrm{Sh}$ are injective and the cokernel of $\bigoplus_{n\geq 0}[S^0, X^n ]^G \xrightarrow{1-\mathrm{Sh}}  \bigoplus_{n\geq 0}[S^0, X^n ]^G$ is by definition  $\colim_n [S^{0}, X^n]^G$, we conclude that 
		\[  \colim_n [S^{0}, X^n]^G \xrightarrow{\cong} [S^{0},\mathrm{hocolim}_n  X^n]^G.   \]
\end{proof}

\begin{theorem}\label{prop: compact} Let $G$ be a countable discrete group and let $\mathcal{F}$ be its family of finite subgroups. The following are equivalent.
\begin{itemize}
	\item[(1)] $S^0$ is a compact object in $\mathrm{Ho}(\Gspec)$.
	\item[(2)] There exists a finite length resolution of the Burnside ring functor $\underline{A}$ in $\mathrm{Mack}_{\mathcal{F}}G$ consisting of finitely generated projective modules.
	\item[(3)] There exists a finite dimensional (stable) model for $\underline{E}G$ and there exists a finite type stable model for $\underline{E}G$.

\end{itemize}	
\end{theorem}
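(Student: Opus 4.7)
The plan is to prove the cycle $(3) \Rightarrow (2) \Rightarrow (1) \Rightarrow (3)$, with $(2) \Rightarrow (3)$ included for symmetry. The $(3) \Leftrightarrow (2)$ part combines Theorems \ref{th: main theorem} and \ref{th: finite type} with standard homological tricks, while the equivalence with $(1)$ is the main new content, routed through the representability of Bredon cohomology by the Eilenberg--MacLane spectra $HM$.

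For $(3) \Rightarrow (2)$: the finite type stable model furnishes, by Theorem \ref{th: finite type}, a resolution $\cdots \to Q_1 \to Q_0 \to \underline{A} \to 0$ by finitely generated free Mackey functors, while the finite dimensional stable model forces $\underline{\mathrm{cd}}_{\mathcal{M}}(G) = d < \infty$ by Theorem \ref{th: main theorem}. Truncating at level $d-1$ gives $0 \to K_{d-1} \to Q_{d-1} \to \cdots \to Q_0 \to \underline{A} \to 0$ in which $K_{d-1} = \mathrm{im}(Q_d \to Q_{d-1})$ is finitely generated (as a quotient of $Q_d$) and projective (by the vanishing of $\mathrm{Ext}^{d+1}$). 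Conversely, for $(2) \Rightarrow (3)$: a finite length finitely generated projective resolution bounds $\underline{\mathrm{cd}}_{\mathcal{M}}(G)$ and hence yields a finite dimensional stable model by Theorem \ref{th: main theorem}; and the standard zig-zag trick (iteratively choose finitely generated projectives $Q_i$ with $P_i \oplus Q_{i-1} \oplus Q_i$ free, with $Q_{-1} = 0$, and splice diagonally) produces a possibly infinite length resolution of $\underline{A}$ by finitely generated free Mackey functors, whence a finite type stable model by Theorem \ref{th: finite type}.

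For $(2) \Rightarrow (1)$: the Mackey category embeds fully faithfully into $\mathrm{Ho}(\Gspec)$ via $G/H \mapsto \Sigma^\infty G/H_+$, and idempotents split in this triangulated category. Each finitely generated projective $P_i$ in the resolution is therefore realized as a retract of a finite wedge $\bigvee_j \Sigma^\infty G/H_{i,j+}$, which is compact. Iterating cofiber sequences along the finite length resolution (following the template of Section \ref{sec: hard}) assembles $S^0$ from finitely many compact objects via finitely many cofiber sequences and retracts; since the class of compact objects is closed under both, $S^0$ is compact.

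For $(1) \Rightarrow (2)$: the representability $[S^0, HM[n]]^G \cong \mathrm{Ext}^n_{\mathcal{M}_{\mathcal{F}}G}(\underline{A}, M)$ is the key. Since each $\Sigma^\infty G/H_+$ is compact, $\pi_k^H$ preserves infinite wedges and therefore $\bigvee_\alpha HM_\alpha \cong H(\bigoplus_\alpha M_\alpha)$ in $\mathrm{Ho}(\Gspec)$. Compactness of $S^0$ thus implies that $\mathrm{Ext}^n_{\mathcal{M}_{\mathcal{F}}G}(\underline{A}, -)$ preserves coproducts for every $n$, so $\underline{A}$ is of type $FP_\infty$ over $\mathcal{M}_{\mathcal{F}}G$ by the standard Bieri-style characterization. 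Moreover, by the Neeman--Thomason theorem, $S^0$ lies in the thick subcategory generated by $\{\Sigma^\infty G/H_+\}_{H \in \mathcal{F}}$, so it is built in finitely many steps with uniformly bounded suspension heights; consequently $[S^0, HM[n]]^G = 0$ for all $n$ large, forcing $\underline{\mathrm{cd}}_{\mathcal{M}}(G) < \infty$. A Schanuel-type truncation as in $(3) \Rightarrow (2)$ then produces the desired finite length resolution by finitely generated projectives. The principal obstacle is making this direction airtight: one must carefully appeal to the representability of Bredon cohomology (from \cite{DHLPS}), the wedge-additivity of $HM$, and the Neeman--Thomason description of compact objects in order to extract simultaneously both $FP_\infty$ and finite projective dimension from the single hypothesis that $S^0$ is compact.
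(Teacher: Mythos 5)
Your proposal is correct in substance but follows a genuinely different route from the paper in two of the three legs. For the direction extracting algebra from compactness, both you and the paper use the Eilenberg--MacLane representability together with Proposition \ref{thm: bieri-eckmann} to get that $\underline{A}$ is $FP_\infty$ over $\mathcal{M}_{\mathcal{F}}G$ (note that your phrase ``preserves coproducts'' is exactly what the paper's countable refinement delivers, and the countability of $G$ is what makes that proposition applicable); but to bound $\underline{\mathrm{cd}}_{\mathcal{M}}(G)$ you invoke Neeman--Thomason to place $S^0$ in the thick subcategory generated by the orbits, whereas the paper first builds a finite type stable model from $FP_\infty$ and then uses Lemma \ref{lemma: compact} to exhibit $S^0$ as a retract of a skeleton $X^n$. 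Both work; yours imports an external theorem, the paper's stays self-contained. For the converse direction, the paper proves $(3)\Rightarrow(1)$ by cellular approximation (the identity of $S^0$ factors through the $n$-skeleton of a finite type model, so $S^0$ is a retract of a compact object), while you prove $(2)\Rightarrow(1)$ by geometrically realizing the finite projective resolution. In $(2)\Rightarrow(3)$ you also get finite-dimensionality of the stable model directly from Theorem \ref{th: main theorem}, whereas the paper detours through the length bound \eqref{eq: length bound} to produce a finite dimensional \emph{unstable} model, which is a slightly stronger conclusion.

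The one place you should be more careful is $(2)\Rightarrow(1)$. You cannot literally ``follow the template of Section \ref{sec: hard}'': at the top dimension that construction makes the projective kernel free by an Eilenberg swindle, which wedges on \emph{infinitely} many cells and destroys compactness, and one cannot in general upgrade condition (2) to a finite length finitely generated \emph{free} resolution (that is precisely the stronger hypothesis of Theorem \ref{th: finite}). So your idempotent-splitting device is genuinely needed, not optional: you must rerun the inductive construction of Section \ref{sec: hard} with ``projective cells'' $\Sigma^n\tilde{P}_n$, where $\tilde{P}_n$ splits the idempotent on a finite wedge of orbits corresponding to $P_n\oplus Q_n\cong F_n$. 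This does go through --- $[\tilde{P}_n,Y]^G\cong\mathrm{Hom}_{\mathcal{M}_{\mathcal{F}}G}(P_n,\pi_0^-(Y))$ by Yoneda and naturality of the splitting, and $\tilde{P}_n$ is connective with Bredon homology concentrated in degree $0$, so the Hurewicz argument is unchanged --- but these verifications are the real content of that implication and should be written out. Alternatively, having established $(2)\Rightarrow(3)$, you could simply prove $(3)\Rightarrow(1)$ by the paper's retract argument and avoid realizing projectives altogether.
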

\begin{proof} Asssume that  $\Sigma^{\infty}\ul{E}G_{+}\cong S^0$ is a compact object and let $\{M_i\}_{i \in I}$ be a countable collection of Mackey functors for $G$. As noted in Section $3$, there exists a family of Eilenberg-Maclane $G$-spectra $\{HM_i\}_{i\in I}$ such that 
	\[      [\Sigma^{\infty}\ul{E}G_+, \Sigma^nHM_{i}]^G \cong \mathrm{H}^n_{\mathcal{F}}(G,M_i)  \]
	and 
	\[      [\Sigma^{\infty}\ul{E}G_+, \bigvee_{i\in I}\Sigma^nHM_i]^G \cong \mathrm{H}^n_{\mathcal{F}}(G,\bigoplus_{i \in I}M_i). \]
	for each $n\geq 0$.
	We conclude that for each $n\geq 0$, $\mathrm{H}^{n}_{\mathcal{F}}(G,-)$ commutes with countable direct sums of Mackey functors. By Proposition \ref{thm: bieri-eckmann} below in the setting of Mackey functors (note that $\mathcal{M}_{\mathcal{F}}G$ is countable if $G$ is), it follows that the Burnside ring $\underline{A}$ is of type $FP_{\infty}$ in the category of $G$-Mackey functors, meaning that there exists a (possibly infinite length) resolution $P_{\ast} \rightarrow \underline{A}$ consisting of finitely generated free $G$-Mackey functors. By Theorem \ref{th: finite type} this implies that there exist a stable model $ X$ for $\underline{E}G$ of finite type. In particular, $S^0\cong \mathrm{hocolim}_n  X^n$, where each $ X^n$ is a compact object of $\mathrm{Ho}(\Gspec)$. It follows from Lemma \ref{lemma: compact} that 
		\[  \colim_n [S^{0}, X^n]^G \xrightarrow{\cong} [S^{0},S^0]^G.   \]
By considering the identity map in $[S^{0},S^0]^G$, we conclude that there exists an $n \in \mathbb{N}$ such that $S^{0}$ is a retract of $ X^n$ in $\mathrm{Ho}(\Gspec)$. But then $\mathrm{H}^{n+1}_{\mathcal{F}}(G,M)=\mathrm{H}^{n+1}_{G}(S^0,M)$ is a retract of the $\mathrm{H}^{n+1}_{G}( X^n,M)$, which is zero for every Mackey functor $M$, hence $\underline{\mathrm{cd}}_{\mathcal{M}}(G)\leq n $. This implies that the kernel of $P_{n-1} \rightarrow P_{n-2}$ is a finitely generated projective module, proving that (1) implies (2).

Now assume that (2) holds. Then $\underline{\mathrm{cd}}_{\mathcal{M}}(G)<\infty$ and a application of Schanuel's lemma shows that there exists a resolution of $\underline{A}$ in $\mathrm{Mack}_{\mathcal{F}}G$ consisting of finitely generated free modules. It follows from Theorem \ref{th: finite type} that there exists a finite type stable model for $\underline{E}G$ and that the number of conjugacy class of finite subgroups of $G$ is finite. This implies that the length $l(G)$ of $G$ is also finite. Therefore the inequality (\ref{eq: length bound}) implies that $\underline{\mathrm{cd}}(G)<\infty$. We conclude that there exists a finite dimensional model for $\underline{E}G$ and hence also a finite dimensional stable model for $\underline{E}G$. This proves that (2) implies (3).

Next, assume that (3) holds. Let $ Y$ be an $n$-dimensional stable model for $\underline{E}G$ and let $ X$ be a finite type stable model for $\underline{E}G$. So in $\mathrm{Ho}(\Gspec)$ there are isomorphisms $g:  X \xrightarrow{\cong} S^{0}$ and $f:  Y \xrightarrow{\cong} S^{0} $ and a map $i:  X^n \rightarrow  X$. Denote $\alpha = f^{-1}\circ g \circ i $ and note that by cellular approximation, there exists a map $\beta:  Y \rightarrow  X^n$ such that $i\circ \beta = g^{-1}\circ f$.  It follows that $\alpha \circ \beta =\mathrm{Id}$, proving that $Y$, and hence also $S^{0}$, is a retract of $ X^n$. Since $X^n$ is compact and retracts of compact object are compact, we conclude that $S^{0}$ is compact. This show that (3) implies (1).

\end{proof}

Note that Theorem \ref{th: main theorem}, Theorem \ref{th: finite type} and Theorem \ref{prop: compact} together imply Theorem \ref{th: intro2} from the introduction. We now turn to finite stable models.
\begin{theorem} \label{th: finite} Let $G$ be a discrete group and let $\mathcal{F}$ be its family of finite subgroups. The following are equivalent.
\begin{itemize}
\item[(1)]  There exists a finite stable model for $\underline{E}G$ .
\item[(2)] There exists a finite stable model for $\underline{E}G$ of dimension $\underline{\mathrm{cd}}_{\mathcal{M}}(G)$.
\item[(3)] There exists a finite length resolution of $\underline{A}$ in $\mathrm{Mack}_{\mathcal{F}}G$ consisting of finitely generated free modules.

\end{itemize}

\end{theorem}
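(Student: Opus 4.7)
The plan is to establish the circular chain $(2) \Rightarrow (1) \Rightarrow (3) \Rightarrow (2)$, of which the first implication is immediate from the definitions. For $(1) \Rightarrow (3)$, I would apply the machinery of Section \ref{easy}: given a finite stable model $X$ of dimension $d$, the associated chain complex displayed as (\ref{eq: tralala}) provides a free resolution of $\underline{A}$ in $\mathrm{Mack}_{\mathcal{F}}G$ of length at most $d$. Each term has the form $\bigoplus_{i \in I_n} \mathbb{Z}^G[-,H_i]$ with $I_n$ finite since $X$ is of finite type, so every term is finitely generated free.

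For $(3) \Rightarrow (2)$, set $m = \underline{\mathrm{cd}}_{\mathcal{M}}(G)$, which is finite because a length-$m$ free resolution is assumed. My plan is to execute the inductive cell-attachment scheme from the proof of Theorem \ref{th: finite type} for $n = 0, 1, \ldots, m-1$ and then close it up using the obstruction-theoretic argument of Section \ref{sec: hard}. Via Theorem \ref{prop: compact}, assumption (3) guarantees both that $S^0$ is compact in $\mathrm{Ho}(\Gspec)$ and that finite-dimensional stable models exist, while Theorem \ref{th: finite type} supplies only finitely many conjugacy classes of finite subgroups. This enables the choice of a finite zero-skeleton $X^0 = \bigvee_{i \in I_0} G/H_{i+}$ together with a $0$-connected map $f_0 \colon X^0 \to S^0$. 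Proceeding as in the proof of Theorem \ref{th: finite type}, I would inductively build $n$-connected maps $f_n \colon X^n \to S^0$ by attaching finitely many equivariant $n$-cells at each stage; finite generation of each successive kernel follows from Schanuel's lemma applied against the hypothesized finitely generated free resolution.

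The crucial step, which I expect to be the main obstacle, is the termination at $n = m$. After $X^{m-1}$ has been built, the module $\ker d_{m-1}$ in the chain complex analogous to (\ref{eq: freeres1}) is, by a Schanuel-type comparison with the length-$m$ finitely generated free resolution from (3), stably isomorphic to the top free module of that resolution. Unlike Section \ref{sec: hard}, where an Eilenberg swindle was invoked to convert a projective kernel into a free one at the cost of infinitely many extra cells, here one only needs to augment $X^{m-1}$ by finitely many additional $(m-1)$-cells (corresponding to a finitely generated free complement) before the kernel itself becomes finitely generated free. Attaching finitely many $m$-cells along a set of generators for this kernel and invoking the stable Hurewicz argument from Section \ref{sec: hard} then produces the desired weak equivalence $\tilde f \colon X^m \xrightarrow{\cong} S^0$, yielding a finite stable model for $\underline{E}G$ of dimension exactly $m = \underline{\mathrm{cd}}_{\mathcal{M}}(G)$.
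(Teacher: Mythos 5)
Your proposal is correct and follows essentially the same route as the paper: both arguments hinge on Schanuel's lemma applied against the hypothesized finite-length finitely generated free resolution to show the kernel at level $m=\underline{\mathrm{cd}}_{\mathcal{M}}(G)$ is stably free, so that only finitely many extra $(m-1)$- and $m$-cells are needed (avoiding the Eilenberg swindle), after which the realization machinery of Sections \ref{easy}--\ref{sec: hard} and Theorem \ref{th: finite type} produces the finite $m$-dimensional model. The only cosmetic difference is order of operations (the paper first fixes the resolution algebraically and then realizes it, you build the skeleta first); your appeal to Theorem \ref{prop: compact} is superfluous and would needlessly import its countability hypothesis, but it is not load-bearing, so the argument stands without it.
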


\begin{proof}
The proof is very similar to the proof of Theorem \ref{th: finite type}, so we will only give a sketch. Assume there exists a finite stable model  for $\underline{E}G$. Then its associated resolution is obviously a finite length free resolution of $\underline{A}$ in $\mathrm{Mack}_{\mathcal{F}}G$ consisting of finitely generated free modules, proving that (1) implies (3). Assuming (3) and $\underline{\mathrm{cd}}_{\mathcal{M}}(G)=n$, we obtain  $\mathcal{M}_{\mathcal{F}}G$-resolutions

\[   0 \rightarrow F_m \rightarrow F_{m-1} \rightarrow F_{m-2} \rightarrow \ldots \rightarrow F_{0} \rightarrow \underline{A} \rightarrow 0   \]
and
\[   0 \rightarrow P \rightarrow F_{n-1} \rightarrow F_{n-2} \rightarrow \ldots \rightarrow F_{0} \rightarrow \underline{A} \rightarrow 0   \]
with $m\geq n$  and	where each $F_i$ is free and finitely generated and $P$ is finitely generated projective. Another application of Schanuel's lemma shows that there exists a finitely generated free Mackey functor $F$ such that $P\oplus F = \widetilde{F}_n$ is finitely generated and free, i.e.~$P$ is stably free. This implies the existence of free resolution of $\underline{A}$ in $\mathrm{Mack}_{\mathcal{F}}G$ of length $n$, consisting of finitely generated free modules. This resolution can be used to build a finite stable model for $\underline{E}G$ of dimension $n$. This shows that (3) implies (2). Since (2) clearly implies (1), the theorem is proven.
	
\end{proof}

In the remainder of this section we will prove a refinement of the Bieri-Eckmann criterion for group cohomology (see \cite{BieriEck}) in the more general setting of modules over a category (see \cite[Th. 2.6]{StGreen} and \cite[Th. 5.3-5.4]{MPN2}). This criterion was used in the proof of Theorem \ref{prop: compact}. The refinement refers to the fact that we only assume preservation of countable direct sums, while the usual criterion asks for preservation of all filtered colimits. The proof is completely standard, but since  we could not find a reference, we have included it here.

 Assume that $\mathcal{C}$ is a countable category enriched in abelian groups, meaning that $\mathcal{C}$ is a small category such that the set of objects and all the abelian groups of morphisms of $\mathcal{C}$ are countable. A module $M$ over a $\mathcal{C}$ (i.e.~an additive contravariant functor from $\mathcal{C}$ to $\zmod$) is called countable is $M(c)$ is countable for every $c \in \mathrm{Ob}(\mathcal{C})$. Note that finitely generated free modules over $\mathcal{C}$ are countable. Since finitely generated modules over $\mathcal{C}$ are by definition quotients of finitely generated free modules, all finitely generated modules over $\mathcal{C}$ are countable. 
\begin{lemma}\label{lemma: fin gen} Let $M$ be a countable right module over $\mathcal{C}$. 
	\begin{itemize}
		\item[(1)] The module $M$ has a countable ascending union of finitely generated submodules
		\[   M_0 \subseteq M_1 \subseteq M_2 \subseteq \ldots M_{n} \subseteq M_{n+1} \subseteq \ldots   \]
		such that $\colim_n M_n=M$.
		\item[(2)] The module $M$ is finitely generated if and only if $\mathrm{Hom}_{\mathcal{C}}(M,-)$ preserves countable direct sums.
	\end{itemize}
\end{lemma}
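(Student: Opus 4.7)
For part (1), the plan is to use the countability assumption to explicitly enumerate generators. Since $\mathrm{Ob}(\mathcal{C})$ is countable and each abelian group $M(c)$ is countable, the disjoint union $\coprod_{c \in \mathrm{Ob}(\mathcal{C})} M(c)$ is countable. I would enumerate its elements as $m_0, m_1, m_2, \ldots$ with $m_i \in M(c_i)$, and let $M_n$ be the submodule generated by $\{m_0, \ldots, m_n\}$. Each $M_n$ is finitely generated by construction, the chain is ascending, and since every element of $M$ occurs as some $m_i$ we obtain $\colim_n M_n = M$.

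For the forward direction of part (2), the idea is standard Yoneda bookkeeping. If $m_1, \ldots, m_k$ with $m_i \in M(c_i)$ generate $M$, then the Yoneda lemma provides a surjection $\bigoplus_{i=1}^k \mathbb{Z}[-, c_i] \twoheadrightarrow M$, so any morphism $f : M \to \bigoplus_{j \in J} N_j$ is determined by the values $f(m_i)$. Each $f(m_i)$ lives in $\bigoplus_{j} N_j(c_i)$ and has finite support, and since there are only finitely many generators, the union of these supports is some finite $F \subseteq J$. Hence $f$ factors through $\bigoplus_{j \in F} N_j$, placing $f$ in the image of the canonical map $\bigoplus_j \mathrm{Hom}_{\mathcal{C}}(M, N_j) \to \mathrm{Hom}_{\mathcal{C}}(M, \bigoplus_j N_j)$, which is therefore an isomorphism.

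The reverse direction is the substantive part, and the key move is a ``resolution of the identity'' argument tailored to countable direct sums. Using part (1), I would write $M = \bigcup_{n \in \mathbb{N}} M_n$ with each $M_n$ finitely generated, and define
\[   \phi : M \longrightarrow \bigoplus_{n \in \mathbb{N}} M/M_n   \]
whose $n$-th coordinate is the quotient $q_n : M \twoheadrightarrow M/M_n$. This is well-defined because any element of $M$ lies in some $M_k$, so only finitely many components $q_n$ are nonzero on it. The hypothesis then implies that $\phi$ lies in the image of $\bigoplus_n \mathrm{Hom}_{\mathcal{C}}(M, M/M_n)$, and hence has finite support; this forces $q_n = 0$, i.e.~$M = M_n$, for all $n$ larger than some threshold. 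In particular $M$ is finitely generated.

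The only real obstacle is checking that the weakening from ``preserves all filtered colimits'' (the classical Bieri--Eckmann hypothesis) to ``preserves countable direct sums'' still suffices, and this is exactly where the countability of both $\mathcal{C}$ and $M$ enters: without it, the chain produced in part (1) might not be indexable by $\mathbb{N}$, and the sum $\bigoplus_n M/M_n$ might fall outside the scope of the weakened hypothesis. With countability in hand, the standard argument goes through essentially verbatim.
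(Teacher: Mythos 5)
Your proof is correct and follows essentially the same route as the paper's: part (1) by enumerating elements over the countably many objects, the forward direction of (2) by using a finite generating set to factor any map into a countable direct sum through a finite subsum, and the reverse direction by applying the hypothesis to $\bigoplus_{n} M/M_n$ for the filtration produced in (1). The only cosmetic difference is that you package the reverse direction via the diagonal morphism $(q_n)_n \colon M \to \bigoplus_n M/M_n$, whereas the paper runs the telescope exact sequence to conclude $\colim_n \mathrm{Hom}_{\mathcal{C}}(M,M/M_n)=0$ and then evaluates on the identity; the two are interchangeable.
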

\begin{proof} First enumerate the objects of $\mathcal{C}$, i.e.~write $\mathrm{Ob}(\mathcal{C})=\{c_0,c_1,c_2,\ldots\}$.
	Since $M(c_i)$ is countable for every $i\geq 0$, there exists for every object $c_i$ a $\mathcal{C}$-module map 
	
	\[  f_{i}: \bigoplus_{j \geq 0} \mathcal{C}(-,c_i) \rightarrow M \]	
	such that $f_i(c_i)$ is surjective.  For each pair $(i,j) \in \mathbb{N}\times \mathbb{N}$, let $M_i^j$ be the image of $j$-th summand $\mathcal{C}(-,c_i)$ under $f_i$. Now define for each $n\geq 0$, the submodule $M_n$ of $M$ generated by the $M_{i}^j$ for all $i\leq n$ and $j\leq n$. Then each $M_n$ is finitely generated and there is a countable ascending union 
	\[   M_0 \subseteq M_1 \subseteq M_2 \subseteq \ldots M_{n} \subseteq M_{n+1} \subseteq \ldots   \]
	such that $\colim_n M_n=M$.\\
	
	Now assume that $M$ is finitely generated. By definition, this means that there exists a surjection
	\[   \pi: F=\bigoplus_{i=0}^k \mathcal{C}(-,c_i) \rightarrow M.  \]
	It follows from Yoneda's lemma that $\mathrm{Hom}_{\mathcal{C}}(F,-)$ preserves countable direct sums. Let $K$ denote the kernel of $\pi$ and let $\{V_i\}_{i \in I}$ be a countable collection of modules over $\mathcal{C}$. Note that for every module $V$, the canonical map
	\[   \bigoplus_{i \in I} \mathrm{Hom}_{\mathcal{C}}(V,V_i) \rightarrow \mathrm{Hom}_{\mathcal{C}}(V,\bigoplus_{i\in I}V_i) \]
	is injective. We therefore obtain a commutative diagram with exact rows
	\[ \xymatrix{0 \ar[r] & \mathrm{Hom}_{\mathcal{C}}(M,\bigoplus_{i \in I}V_i) \ar[r] &  \mathrm{Hom}_{\mathcal{C}}(F,\bigoplus_{i \in I}V_i) \ar[r] &  \mathrm{Hom}_{\mathcal{C}}(K,\bigoplus_{i \in I}V_i) \\
		0 \ar[r] & \bigoplus_{i \in I}\mathrm{Hom}_{\mathcal{C}}(M,V_i) \ar[u]\ar[r] &  \bigoplus_{i \in I}\mathrm{Hom}_{\mathcal{C}}(F,V_i) \ar[r]\ar[u] &  \bigoplus_{i \in I}\mathrm{Hom}_{\mathcal{C}}(K,V_i)\ar[u]}\] 
	where the middle vertical arrow is an isomorphism and the right vertical arrow is injective. It follows from a diagram chase that
	\[   \bigoplus_{i \in I} \mathrm{Hom}_{\mathcal{C}}(M,V_i) \rightarrow \mathrm{Hom}_{\mathcal{C}}(M,\bigoplus_{i\in I}V_i) \]
	is an isomorphism, as desired.
	
	Finally, suppose $M$ is a countable module such that $\mathrm{Hom}_{\mathcal{C}}(M,-)$ preserves countable direct sums. By the first part of the lemma, we can write $M$ as a countable ascending union $\bigcup_{n\geq 0}M_n$ of finitely generated submodules. Now consider the directed system $\{M/M_{n}\}_{n\geq 0}$ and note that $\colim_n M/M_n = 0$ fits into a short exact sequence
	\[   0 \rightarrow \bigoplus_{n\geq 0}M/M_{n} \xrightarrow{\mathrm{Id}-\mathrm{Sh}}  \bigoplus_{n\geq 0}M/M_{n} \rightarrow \colim_n M/M_n\rightarrow 0.  \]
	Since $F(-)=\mathrm{Hom}_{\mathcal{C}}(M,-)$ preserves countable direct sums, we obtain an exact sequence
	\[   0 \rightarrow \bigoplus_{n\geq 0}F(M/M_{n}) \xrightarrow{\mathrm{Id}-F(\mathrm{Sh})}  \bigoplus_{n\geq 0}F(M/M_{n}) \rightarrow  F(\colim_n M/M_n)=0.  \]
	It follows that the cokernel of the map $\mathrm{Id}-F(\mathrm{Sh})$ is the colimit of 
	\[      F(M_1) \rightarrow F(M_2) \rightarrow  F(M_3) \rightarrow \ldots    \]
	and conclude that 
	\[ \colim_n \mathrm{Hom}_{\mathcal{C}}(M,M/M_n)=0.  \]
	This means that the image of the identity map $M \rightarrow M$ must be contained in $M_n$ for some $n$, implying that $M=M_n$ is finitely generated. 	
\end{proof}

\begin{proposition} \label{thm: bieri-eckmann}Let $\mathcal{C}$ be a countable category and $M \in \mathrm{Mod-}\mathcal{C}$ a countable module. For every $n\geq 0$, the following two statements are equivalent.
	\begin{itemize}
		\item[(1)] The functor $\mathrm{Ext}^k_{\mathcal{C}}(M,-)$ commutes with countable direct sums for every $k\leq n$.
		\item[(2)] There exists an exact sequence
		\[  P_n \rightarrow P_{n-1} \rightarrow \ldots \rightarrow P_0 \rightarrow M \rightarrow 0   \]
		in  $\mathrm{Mod-}\mathcal{C}$ such that all $P_i$ are finitely generated projective (free) modules.
		
	\end{itemize}
	
\end{proposition}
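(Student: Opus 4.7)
The plan is to prove both directions by induction on $n$, using Lemma \ref{lemma: fin gen}(2) as the base case and dimension shifting to reduce the inductive step to the same statement for the kernel $K$ of a finitely generated free surjection onto $M$. Note first that finitely generated free modules are countable: if $\mathcal{C}$ is countable then each abelian group $\mathcal{C}(c',c)$ is countable, so $\bigoplus_{i=0}^{k}\mathcal{C}(-,c_i)$ evaluated anywhere is countable. Consequently any submodule of a finitely generated free module is countable, which will let us apply the inductive hypothesis to syzygies.

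For the direction $(2)\Rightarrow(1)$, I would actually establish the stronger statement that $\mathrm{Ext}^{k}_{\mathcal{C}}(M,-)$ preserves \emph{all} direct sums for $k\leq n$. Indeed, if $F\to M$ is a surjection with $F$ finitely generated free, then any map $M\to\bigoplus_{i\in I}V_i$ pulls back to a map $F\to\bigoplus_{i\in I}V_i$, which by the Yoneda lemma factors through a finite sub-sum; since $F\to M$ is surjective the original map factors through the same finite sub-sum. Hence $\mathrm{Hom}_{\mathcal{C}}(M,-)$ commutes with arbitrary direct sums. For $k\geq 1$, use the kernel $K_{0}=\ker(P_{0}\to M)$: the partial resolution $P_{n}\to\cdots\to P_{1}\to K_{0}\to 0$ has length $n-1$, so by induction $\mathrm{Ext}^{k-1}_{\mathcal{C}}(K_{0},-)$ preserves direct sums for $k-1\leq n-1$, and dimension shifting $\mathrm{Ext}^{k}_{\mathcal{C}}(M,-)\cong\mathrm{Ext}^{k-1}_{\mathcal{C}}(K_{0},-)$ finishes the argument.

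For the direction $(1)\Rightarrow(2)$, the base case $n=0$ is Lemma \ref{lemma: fin gen}(2): $\mathrm{Hom}_{\mathcal{C}}(M,-)$ preserves countable direct sums precisely when $M$ is finitely generated. Given this, pick a surjection $F_{0}\twoheadrightarrow M$ from a finitely generated free $\mathcal{C}$-module and let $K=\ker(F_{0}\to M)$. Then $K$ is countable, and the four-term exact sequence
\[
0\to\mathrm{Hom}_{\mathcal{C}}(M,-)\to\mathrm{Hom}_{\mathcal{C}}(F_{0},-)\to\mathrm{Hom}_{\mathcal{C}}(K,-)\to\mathrm{Ext}^{1}_{\mathcal{C}}(M,-)\to 0
\]
together with dimension shifting $\mathrm{Ext}^{k}_{\mathcal{C}}(K,-)\cong\mathrm{Ext}^{k+1}_{\mathcal{C}}(M,-)$ for $k\geq 1$ will allow me to transfer hypothesis (1) for $M$ up to degree $n$ into hypothesis (1) for $K$ up to degree $n-1$. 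For $k\geq 1$ this is immediate; for $k=0$ it is a diagram chase (or five lemma applied term-by-term to countable direct sums) using that the outer three functors in the four-term sequence are already known to preserve countable direct sums. By induction $K$ admits a length $n-1$ resolution by finitely generated free modules, and splicing with $0\to K\to F_{0}\to M\to 0$ produces the required length $n$ resolution of $M$.

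The main obstacle I expect is the $k=0$ step in the inductive argument: showing that $\mathrm{Hom}_{\mathcal{C}}(K,-)$ preserves countable direct sums requires more than just left-exactness of $\mathrm{Hom}$, because the four-term sequence has a nontrivial cokernel term $\mathrm{Ext}^{1}_{\mathcal{C}}(M,-)$. The trick is that direct sums are exact in $\mathbb{Z}\text{-Mod}$, so applying the natural transformation ``$\bigoplus_{i}F(V_{i})\to F(\bigoplus_{i}V_{i})$'' to every term of the four-term sequence produces a commutative diagram with exact rows in which three of the four vertical maps are isomorphisms; a standard diagram chase then forces the remaining map to be an isomorphism as well. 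Everything else — countability of $K$, passing from finitely generated projective to finitely generated free at each step via Eilenberg's swindle or by simply choosing free covers at each stage — is routine.
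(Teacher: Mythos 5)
Your proposal is correct and follows essentially the same route as the paper: Lemma \ref{lemma: fin gen} as the base case, dimension shifting to the first syzygy, and a diagram chase on the four-term $\mathrm{Hom}$--$\mathrm{Ext}$ sequence to get preservation of countable sums for $\mathrm{Hom}_{\mathcal{C}}(K,-)$ (the paper organizes the induction by extending a partial resolution of $M$ one step at a time rather than by passing to $K$ and invoking the statement for $n-1$, but the content is identical). One small correction: in the $(2)\Rightarrow(1)$ direction the isomorphism $\mathrm{Ext}^{k}_{\mathcal{C}}(M,-)\cong\mathrm{Ext}^{k-1}_{\mathcal{C}}(K_{0},-)$ you invoke fails at $k=1$, where $\mathrm{Ext}^{1}_{\mathcal{C}}(M,-)$ is only the cokernel of $\mathrm{Hom}_{\mathcal{C}}(P_{0},-)\rightarrow\mathrm{Hom}_{\mathcal{C}}(K_{0},-)$; since both of these functors preserve direct sums and cokernels commute with direct sums, your conclusion still goes through.
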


\begin{proof} Let us first prove by induction on $n$ that (2) implies (1). If $n=0$, then $(2)$ says that $M$ is finitely generated, in which case it follows from Lemma \ref{lemma: fin gen} that $\mathrm{Ext}^0_{\mathcal{C}}(M,-)=\mathrm{Hom}_{\mathcal{C}}(M,-)$ commutes with countable direct sums. Now let $n>0$ and assume there exists a projective resolution 
	
	\[\ldots \rightarrow P_n \rightarrow P_{n-1} \rightarrow \ldots \rightarrow P_0 \rightarrow M \rightarrow 0   \]
	such that $P_i$ is finitely generated for $i \in \{0,\ldots,n\}$. Let $\{V_i\}_{i \in I}$ be a countable collection of modules over $\mathcal{C}$.
	By induction we just need to show that $\mathrm{Ext}^n_{\mathcal{C}}(M,-)$ preserves countable direct sums. This follows easily from the commutative diagram
	\[\xymatrix{\mathrm{Hom}_{\mathcal{C}}(P_{n-1},\bigoplus_{i\in I}V_i) \ar[r] & 	\mathrm{Hom}_{\mathcal{C}}(P_n,\bigoplus_{i\in I}V_i) \ar[r] & 	\mathrm{Hom}_{\mathcal{C}}(P_{n+1},\bigoplus_{i\in I}V_i) \\
		\bigoplus_{i\in I}\mathrm{Hom}_{\mathcal{C}}(P_{n-1},V_i) \ar[r] \ar[u]& 	\bigoplus_{i\in I}\mathrm{Hom}_{\mathcal{C}}(P_n,V_i) \ar[r]\ar[u] & 	\bigoplus_{i\in I}\mathrm{Hom}_{\mathcal{C}}(P_{n+1},V_i),\ar[u]}\]
	where the left and middle vertical arrows are isomorphisms and the right vertical arrow is injective.

	Next we prove by induction on $n$ that (1) implies (2).	The case $n=0$ follows from Lemma \ref{lemma: fin gen}. Now let $n\geq 1$ and proceed by induction, assuming the theorem is true for all $k<n$. Assume that the Ext-functor $\mathrm{Ext}^k_{\mathcal{C}}(M,-)$ commutes with countable direct sums for all $k\leq n$. By the induction hypothesis we can construct an exact sequence
	\[ 0 \rightarrow K \rightarrow P_{n-1} \rightarrow \ldots \rightarrow P_0 \rightarrow M \rightarrow 0.    \]
	where all $P_i$ are finitely generated projective (free). By dimension shifting, there is a natural isomorphism
	\[   \mathrm{Ext}^n_{\mathcal{C}}(M,-)\cong  \mathrm{Ext}^1_{\mathcal{C}}(\tilde{K},-),   \]
	and a natural exact sequence
	\[   \mathrm{Hom}_\mathcal{C}(P_{n-1},-) \rightarrow  \mathrm{Hom}_\mathcal{C}(K,-) \rightarrow \mathrm{Ext}^1_{\mathcal{C}}(\tilde{K},-)\rightarrow 0 .  \]
	Here $\tilde{K}$ is the kernel of $P_{n-2} \rightarrow P_{n-3}$. Since $P_{n-1}$ is finitely generated, $\mathrm{Hom}_\mathcal{C}(P_{n-1},-)$ preserves countable direct sums by Lemma \ref{lemma: fin gen}. Since $\mathrm{Ext}^n_{\mathcal{C}}(M,-)\cong  \mathrm{Ext}^1_{\mathcal{C}}(\tilde{K},-)$ also preserves countable direct sums by assumption, it follows that $ \mathrm{Hom}_\mathcal{C}(K,-)$ preserves countable direct sums. Indeed, the canonical map
	\[   \bigoplus_{n\geq 0}  \mathrm{Hom}_\mathcal{C}(K,V_n)\rightarrow  \mathrm{Hom}_\mathcal{C}(K,\bigoplus_{n\geq 0}V_n)  \]
	is always injective and the commutative diagram with exact rows
	\[ \xymatrix{ \mathrm{Hom}_\mathcal{C}(P_{n-1},\bigoplus_{n\geq 0}V_n) \ar[r]  &  \mathrm{Hom}_\mathcal{C}(K,\bigoplus_{n\geq 0}V_n) \ar[r]  &  \mathrm{Ext}^1_{\mathcal{C}}(\tilde{K},\bigoplus_{n\geq 0}V_n) \ar[r]  & 0 \\
		\bigoplus_{n\geq 0} \mathrm{Hom}_\mathcal{C}(P_{n-1},V_n) \ar[r] \ar[u]^{\cong} & 	\bigoplus_{n\geq 0} \mathrm{Hom}_\mathcal{C}(K,V_n) \ar[r] \ar[u] &  \bigoplus_{n \geq 0}\mathrm{Ext}^1_{\mathcal{C}}(\tilde{K},V_n) \ar[r] \ar[u]^{\cong} & 0}\]
	shows that it is also surjective. We therefore conclude from Lemma \ref{lemma: fin gen} that $K$ is finitely generated, i.e.~there exists a finitely generated projective (free) module $P_n$ that surjects onto $K$, yielding an exact sequence
	\[ P_n \rightarrow P_{n-1} \rightarrow \ldots \rightarrow P_0 \rightarrow M \rightarrow 0   \]
	as desired.

\end{proof}

\section{An Example} \label{sec: Examples}
Let $A_5$ be the alternating group on $5$ elements. We recall the construction of a $2$-dimensional acyclic $A_5$-CW-complex without global fixed point, due to Floyd and Richardson (see \cite{floyd}). The particular construction presented here is taken from \cite[Section 9 - Example 4]{LearyNucinkis}. Consider the $1$-skeleton of the simplex with $5$ vertices $\{1,\ldots,5\}$. Recall that the conjugacy class of the $5$-cycle $(1,2,3,4,5)$ in $A_5$ contains $12$ elements, and that each element $x$ of order $5$ in $A_5$ is conjugate to $x^4$ but not to $x^2$ and $x^3$. Therefore, the conjugacy class of $(1,2,3,4,5)$ is of the form $\{x_1,x_1^{-1},x_2^{-1},\ldots,x_6,x_6^{-1}\}$. Now attach $6$ pentagonal cells to the aforementioned $1$-skeleton according to $x_1,x_2,\ldots,x_6$ to form the $2$-complex $M$. One can check that $M$ may also be obtained by identifying opposite faces of a dodecahedron by a twist of $\frac{\pi}{5}$. From this description it is easily seen to be a 2-spine of the punctured Poincar\'{e} homology $3$-sphere which proves that $M$ is acyclic. However, $M$ is not contractible since its fundamental group is isomorphic to $\mathrm{SL}_2(\mathbb{F}_5)$. In fact, it follows from \cite[Prop. 5]{BradyLearyNucinkis} that $M$ cannot be embedded in any contractible $2$-complex. Now let $L$ be the barycentric subdivision of $M$. Then $L$ is a $2$-dimensional acyclic flag complex admitting an admissible $A_5$-action such that $L^{A_5}$ is empty. Here, admissible means that simplices are fixed if and only if they are fixed pointwise. Since every proper subgroup of $A_5$ is solvable, it follows from a result of Segev (e.g. see \cite[Th 3.1.]{Adem}) that $L^H$ is acyclic for every proper subgroup of $A_5$. Denoting the family of proper subgroups of $A_5$ by $\mathcal{P}$, one checks that (e.g. see \cite[Example 5.1]{Adem}) the cellular chain complexes of $L^{H}$, for all $H \in \mathcal{P}$, assemble to form an exact $\mathcal{O}_{\mathcal{P}}A_5$-resolution of the constant functor $\underline{\mathbb{Z}}$ of the form
\begin{equation}   \label{eq: Lforafive}     0 \rightarrow \mathbb{Z}[-,A_5/e] \rightarrow \begin{array}{c}  
\mathbb{Z}[-,A_5/C_2]  \\  
\oplus \\
\mathbb{Z}[-,A_5/C_2]   \\
\oplus \\
\mathbb{Z}[-,A_5/C_3]     \end{array}  \rightarrow \begin{array}{c}  
\mathbb{Z}[-,A_5/A_4]  \\  
\oplus \\
\mathbb{Z}[-,A_5/D_5]   \\
\oplus \\
\mathbb{Z}[-,A_5/D_{3}]     \end{array} \rightarrow \underline{\mathbb{Z}} \rightarrow 
0 , \end{equation}
where $D_n$ is the dihedral group of order $2n$ and $C_n$ is the cyclic group of order $n$.
Denote the $1$-skeleton of $L$ by $L^{1}$. 

The space $L^{1}$ is a finite graph whose vertex set is denoted by $S$ and whose set of edges is denote by $E(L)$. The right angled Coxeter group $W$ associated associated to $L$ is the group defined by the presentation
\[       W = \langle S \ | \ s^2 \ \mbox{for all $s \in S$ and \ }   (st)^2 \ \mbox{if $(s,t) \in E(L)$} \rangle .  \]
Note that $W$ fits into the short exact sequence 
\[   1 \rightarrow N \rightarrow W \xrightarrow{p} F = \bigoplus_{s\in S} C_2 \rightarrow 1\]
where $p$ takes $s\in S$ to the generator of the $C_2$-factor corresponding to $s$.
A subset $J \subseteq S$ is called spherical if the subgroup $W_J=\langle J \rangle$ is finite (and hence isomorphic to $\bigoplus_{s \in J} C_2$). Note that the empty subset of $J$ is spherical. We denote the poset of spherical subsets of $S$ ordered by inclusion by $\mathcal{S}$ and its geometric realization by $K$. Note the $K$ is the cone over the barycentric subdivision of $L$. If $J \in \mathcal{S}$, then $W_J$ is called a spherical subgroup of $W$, while a coset  $wW_J$ is called spherical coset. We denote the poset of spherical cosets, ordered by inclusion, by $W\mathcal{S}$. Note that $W$ acts on $W\mathcal{S}$ by left multiplication, preserving the ordering. The Davis complex $X$ of $W$ is the geometric realization of $W\mathcal{S}$. One sees that $X$ is a proper $3$-dimensional cocompact $W$-CW-complex with strict fundamental domain $K$. Since $X$ admits a complete CAT(0)-metric such that $W$ acts by isometries, it follows that $X$ is a $3$-dimensional cocompact model for $\underline{E}W$ (see \cite[Th. 12.1.1 \& Th. 12.3.4]{DavisBook}). A consequence of this fact is that every finite subgroup of $W$ is subconjugate to some spherical subgroup of $W$. This implies that the group $N$ defined above is torsion-free, proving that $W$ is virtually torsion-free. We refer the reader to \cite{DavisBook} for more details and information about Coxeter groups. 

The singular set $X_{\mathrm{sing}}$ is by definition the subcomplex of $X$ consisting of all cells with non-trivial stabilizers. In others words, it is the geometric realization of the subposet of $W\mathcal{S}$ consisting of cosets $wW_J$ with $J\neq \emptyset$. Note that $X_{\mathrm{sing}}$ is two dimensional. Proposition 4 in \cite{BradyLearyNucinkis} says that $X_{\mathrm{sing}}^K$ is acyclic for every $K \in \mathcal{F}$, $\mathrm{vcd}(W)=\underline{\mathrm{cd}}_{\mathcal{M}}(W)=\underline{\mathrm{cd}}(W)=2$ but $\underline{\mathrm{gd}}(W)=3$. (warning: $L$ is denoted by $K$ in \cite{BradyLearyNucinkis} .) In particular, $X_{\mathrm{sing}}$ is not contractible. (Note that by \cite[Remark 6.4]{LP} there in fact does not exist any contractible proper $W$-CW-complex.) However,  $\Sigma^{\infty}X_{\mathrm{sing}+}$ is a $2$-dimensional stable model for $\underline{E}W$ and $\underline{\mathrm{gd}}_{\mathrm{st}}(W)=2$. Hence, $X_{\mathrm{sing}}$ is an example of a space that is not a model for $\underline{E}W$, but its suspension $\Sigma^{\infty}X_{\mathrm{sing}+}$ is a stable model for $\underline{E}W$ (see Proposition \ref{prop: susp model}). 
 
The action of $A_5$ on $L$ induces a map $A_5 \rightarrow \mathrm{Aut}(W)$ that allows one to form the semi-direct product $\Gamma=W \rtimes A_5$. The action of $A_5$ on $L$ also allows one to extend the action of $W$ on $X$ to $\Gamma$ such that $X$ becomes a $3$-dimensional cocompact model for $\underline{E}\Gamma$ and hence $\underline{\mathrm{gd}}(\Gamma)=3$ (see \cite[Lemma 3.5 and Example 5.1]{LP}). Here, $A_5$ acts trivially on the vertex in $X$ corresponding to $W_{\emptyset} \in W\mathcal{S}$. Since $\Gamma$ contains $W$ as a finite index subgroup, we have $\mathrm{vcd}(\Gamma)=\underline{\mathrm{cd}}_{\mathcal{M}}(\Gamma)=2$. However, it is proven in \cite[Theorem 1.1 and Example 5.1]{LP} that $\underline{\mathrm{cd}}(\Gamma)=3$. We conclude that there exists a $2$-dimensional stable model for $\underline{E}\Gamma$, but that this model cannot be of the form $\Sigma^{\infty}X_{+}$, for some proper $\Gamma$-CW-complex $X$. We finish this paper by explaining how such a $2$-dimensional stable model can be constructed. To do this, we will avoid using the general result that $\mathrm{vcd}(G)=\underline{\mathrm{cd}}_{\mathcal{M}}(G)$ and instead prove this directly for $\Gamma$ using the following lemma, where  $\mathcal{M}_{\mathcal{P}}A_5$ denotes the Mackey category of $A_5$ for the family of proper subgroups $\mathcal{P}$. 
\begin{lemma} The Burnside functor $\underline{A}: \mathcal{M}_{\mathcal{P}}A_5 \rightarrow \zmod: F \mapsto A(F)$ is a projective right $\mathcal{M}_{\mathcal{P}}A_5$-module.

\end{lemma}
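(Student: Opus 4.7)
The plan is to exhibit $\underline{A}$ as a direct summand of a finitely generated free right $\mathcal{M}_{\mathcal{P}} A_5$-module, hence projective.

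First, I would apply the induction functor $\mathrm{ind}_\pi$ of (\ref{eq: ind}) to the free resolution (\ref{eq: Lforafive}) of $\underline{\mathbb{Z}}$ in $\mathcal{O}_{\mathcal{P}} A_5$-mod coming from the Floyd--Richardson complex $L$. Since $\mathrm{ind}_\pi$ takes projective resolutions of $\underline{\mathbb{Z}}$ to projective resolutions of $\underline{A}$ by \cite[Th. 3.8]{MartinezNucinkis06}, this produces a length-$2$ free resolution
\[
0 \to \mathbb{Z}^{A_5}[-, e] \to \mathbb{Z}^{A_5}[-, C_2]^{\oplus 2} \oplus \mathbb{Z}^{A_5}[-, C_3] \to F \xrightarrow{\varepsilon} \underline{A} \to 0
\]
of $\underline{A}$ in $\mathcal{M}_{\mathcal{P}} A_5$-mod, where $F := \mathbb{Z}^{A_5}[-, A_4] \oplus \mathbb{Z}^{A_5}[-, D_5] \oplus \mathbb{Z}^{A_5}[-, D_3]$. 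Projectivity of $\underline{A}$ is then equivalent to splitting the augmentation $\varepsilon$.

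To construct a natural section $s \colon \underline{A} \to F$ of $\varepsilon$, I would exploit the fact that $A_4, D_5, D_3$ represent the conjugacy classes of maximal proper subgroups of $A_5$, so every $K \in \mathcal{P}$ embeds into a conjugate of at least one of them. Via the double coset decomposition
\[
\mathbb{Z}^{A_5}[K, H] = \bigoplus_{[g] \in K \setminus A_5 / H} A(K \cap {}^g H),
\]
we see that $F(K)$ contains $\underline{A}(K) = A(K)$ as a direct summand, corresponding to any chosen pair $(H, g)$ with $K \leq {}^g H$, on which $\varepsilon_K$ restricts to the identity. The candidate section sends $A(K)$ identically into this distinguished summand.

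The main obstacle will be ensuring that these pointwise sections $s_K$ assemble into a natural transformation compatible with all span morphisms of $\mathcal{M}_{\mathcal{P}} A_5$, i.e.\ with both restriction and transfer components. This reduces to a finite combinatorial verification based on the intersection patterns of conjugates of $A_4, D_5, D_3$ in $A_5$ (for instance, two distinct conjugates of $A_4$ intersect in a copy of $C_3$) and the compatibility of restriction and transfer operations on Burnside rings via the double coset formula. Once the splitting is established, $\underline{A}$ is a direct summand of the finitely generated free module $F$, and hence projective.
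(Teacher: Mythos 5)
Your reduction to splitting the augmentation $\varepsilon\colon F\to\underline{A}$ is legitimate, and your computation that $\varepsilon_K$ restricts to the identity on a summand $A(K\cap{}^gH)$ with $K\le{}^gH$ is correct. But the proposed section --- ``send $A(K)$ identically into a distinguished summand'' --- demonstrably fails to be natural, and the failure is not a technicality that a routine combinatorial check will repair. Test it at $K=e$: the spans $A_5/e\xleftarrow{1}A_5/e\xrightarrow{g}A_5/e$ act trivially on $\underline{A}(e)=\mathbb{Z}$ but act on $F(e)=\mathbb{Z}[A_5/A_4]\oplus\mathbb{Z}[A_5/D_5]\oplus\mathbb{Z}[A_5/D_3]$ by permuting cosets, so naturality forces $s_e(1)$ to be an $A_5$-invariant element, i.e.\ an integer combination $aN_{A_4}+bN_{D_5}+cN_{D_3}$ of the three orbit sums. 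Its augmentation is $5a+6b+10c$, so any natural section must satisfy $5a+6b+10c=1$ (e.g.\ $-N_{A_4}+N_{D_5}$); no single basis element works. Thus the correct section, if it exists, necessarily mixes summands with signs, and its existence at all levels simultaneously is exactly the nontrivial content of the lemma. The arithmetic input that makes it work --- the coprimality of the indices of the maximal subgroups, or more precisely the integrality of the idempotent of $A(A_5)\otimes\mathbb{Q}$ supported at $A_5$, which holds because $A_5$ is perfect (non-solvable) --- is absent from your argument, and without it the ``finite combinatorial verification'' has no reason to succeed.

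The paper avoids the explicit construction entirely: it invokes Greenlees--May, by which $\mathrm{Ext}^1_{\mathcal{M}_{\mathcal{P}}A_5}(\underline{A},M)$ is annihilated by a fixed integer $n(\mathcal{P})$ for all $M$, and their Example 21.5(iii) gives $n(\mathcal{P})=1$ for the family of proper subgroups of $A_5$; hence $\mathrm{Ext}^1$ vanishes identically and $\underline{A}$ is projective. If you want a hands-on proof along your lines, you should replace the ``distinguished summand'' section by an invariant element built from the idempotent considerations above and then verify compatibility with restrictions and transfers; as written, the key idea is missing.
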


\begin{proof} Let $\mathrm{H}^{\ast}_{\mathcal{M}_{\mathcal{P}}}(A_5,-)=\mathrm{Ext}^{\ast}_{\mathcal{M}_{\mathcal{P}}A_5}(\underline{A},-)$. It follows from \cite[Cor. 21.4]{GreenleesMay} that there exists a positive non-zero integer $n(\mathcal{P})$ such that multiplication with $n(\mathcal{P})$ annihaltes $\mathrm{H}^{1}_{\mathcal{M}_{\mathcal{P}}}(A_5,M)$ for every $\mathcal{M}_{\mathcal{P}}A_5$-module $M$. It follows from \cite[Example 21.5(iii)]{GreenleesMay} that $n(\mathcal{P})=1$. Hence $\mathrm{H}^{1}_{\mathcal{M}_{\mathcal{P}}}(A_5,M)=0$ for every $\mathcal{M}_{\mathcal{P}}A_5$-module $M$, proving that $\underline{A}$ is projective (but not free).
\end{proof}
Following \cite[Th. 3.8]{MartinezNucinkis06}, one checks that by applying the induction functor associated to $\mathcal{O}_{\mathcal{P}}A_5 \rightarrow \mathcal{M}_{\mathcal{P}}A_5$ to the $\mathcal{O}_{\mathcal{P}}A_5$-chain complex of the barycentric subdivision of $L$, one obtains a free $\mathcal{M}_{\mathcal{P}}A_5$-resolution $F_{\ast}$ of $\underline{A}$ of the form 
\begin{equation}   \label{eq: Lforafive2}     0 \rightarrow \mathbb{Z}^{A_5}[-,e]^6 \xrightarrow{s} \begin{array}{c}  
\mathbb{Z}^{A_5}[-,e]^6  \\  
\oplus \\
\mathbb{Z}^{A_5}[-,C_2]^4   \\
\oplus \\
\mathbb{Z}^{A_5}[-,C_3]^2     \end{array}  \rightarrow 
F_0  \rightarrow \underline{A} \rightarrow 0.  \end{equation}
Since $\underline{A}$ is a projective $\mathcal{M}_{\mathcal{P}}A_5$-module, it follows that there exists a map of $\mathcal{M}_{\mathcal{P}}A_5$-modules
\[    r: \begin{array}{c}  
\mathbb{Z}^{A_5}[-,e]^6  \\  
\oplus \\
\mathbb{Z}^{A_5}[-,C_2]^4   \\
\oplus \\
\mathbb{Z}^{A_5}[-,C_3]^2     \end{array}\rightarrow \mathbb{Z}^{A_5}[-,e]^6     \]
such that $r\circ s =\mathrm{Id}$. Using the fact that these are free functors, it follows that $r$ and $s$ extend to maps of $\mathcal{M}_{\mathcal{F}}A_5$-modules such that $r\circ s=\mathrm{Id}$. Here the family of finite subgroup $\mathcal{F}$ of $A_5$ coincides with the family of all subgroups of $A_5$.

As mentioned above, the Davis complex $X$ of $W$ is a $3$-dimensional cocompact model for $\underline{E}\Gamma$. This implies that $\Sigma^{\infty}X_{+}$ is a 3-dimensional stable model for $\underline{E}\Gamma$ and that its chain complex $\mathrm{ind}_{\pi}(C_{\ast}(X^-))$ is a free $\mathcal{M}_{\mathcal{F}}\Gamma$-resolution of $\underline{A}$. Letting $X_{\mathrm{sing}}$ denote the singular set of $X$ with respect to the $W$-action, there is a short exact sequence of $\mathcal{M}_{\mathcal{F}}\Gamma$-chain complexes 
\begin{equation}\label{eq: resgamma} 0 \rightarrow \mathrm{ind}_{\pi}(C_{\ast}(X^{-}_{\mathrm{sing}})) \rightarrow \mathrm{ind}_{\pi}(C_{\ast}(X^{-})) \rightarrow D_{\ast} \rightarrow 0 \end{equation}
where $D_{\ast}$ is obtained by applying the induction functor associated to $\mathcal{M}_{\mathcal{F}}A_5 \rightarrow \mathcal{M}_{\mathcal{F}}\Gamma$ to (\ref{eq: Lforafive2}) and is therefore of the form
\begin{equation*}       0 \rightarrow D_3=\mathbb{Z}^{\Gamma}[-,\Gamma/e]^6 \rightarrow \begin{array}{c}  
\mathbb{Z}^{\Gamma}[-,\Gamma/e]^6  \\  
\oplus \\
D_2=\mathbb{Z}^{\Gamma}[-,\Gamma/C_2]^4   \\
\oplus \\
\mathbb{Z}^{\Gamma}[-,\Gamma/C_3]^2     \end{array}\rightarrow D_1 \rightarrow  D_0 \rightarrow 
0 . \end{equation*}

 The splitting $r$ above yields a splitting $\rho$ of $D_3 \rightarrow D_2 $ which in turn  (recall that $X_{\mathrm{sing}}$ is $2$-dimensional) leads to a splitting  $\mu $ of $d_3$ in the resolution 
  \[   0 \rightarrow \mathrm{ind}_{\pi}(C_3(X)) \xrightarrow{d_3} \mathrm{ind}_{\pi}(C_2(X)) \xrightarrow{d_2}  \mathrm{ind}_{\pi}(C_1(X)) \xrightarrow{d_1}  \mathrm{ind}_{\pi}(C_0(X)) \xrightarrow{d_0} \underline{A} \rightarrow 0.   \]
 Therefore, $\ker d_1$ is projective (proving that $\underline{\mathrm{cd}}_{\mathcal{M}}(\Gamma)=2$),
 \[\ker d_1 \oplus  \mathrm{ind}_{\pi}(C_3(X))  \cong  \mathrm{ind}_{\pi}(C_2(X)), \]
 and we can construct a free $\mathcal{M}_{\mathcal{F}}\Gamma$-resolution
\[       0 \rightarrow  \mathrm{ind}_{\pi}(C_2(X)) \xrightarrow{\small \Big(\begin{array}{c}  d_2 \\ \mu \end{array}\Big)} \begin{array}{c}
                                                                                                   \mathrm{ind}(C_1(X)) \\ 
                                                                                                                    \oplus \\
                                                                                                                                    \mathbb{Z}^{\Gamma}[-,\Gamma/e]^6  \end{array} \xrightarrow{(d_1,0)} 
                                                                                                                                                                                                  \mathrm{ind}(C_0(X))   \xrightarrow{d_0} \underline{A} \rightarrow 0 .\]
Applying the techniques of the previous section, this resolution corresponds to a $2$-dimensional stable model  $ Y^{2}$ for $\underline{E}\Gamma$ where 
\[     Y^{1}= \Sigma^{\infty} X^{1}_+ \vee \bigvee_{i=1}^6 \Sigma \Gamma/e_+\]
and the spectrum $ Y^2$ fits into the stable cofiber sequence
\[       Y^{1} \rightarrow  Y^2 \rightarrow \Sigma^{\infty}X^2_{\mathrm{sing}}/X^1_{\mathrm{sing}} \vee A \xrightarrow{\Sigma^{\infty}\alpha \vee \Sigma^{2}\rho} \Sigma  Y^{1}  \]
where 

\[  A=  \bigvee_{i=1}^6 \Sigma^{2}\Gamma/e_{_+}\vee \bigvee_{i=1}^4 \Sigma^{2}\Gamma/C_{2_+}\vee \bigvee_{j=1}^2 \Sigma^{2}\Gamma/C_{3_+}  \]
and $\alpha$ is the unstable attaching map fitting into the homotopy cofiber sequence

\[  {X^1_{\mathrm{sing}}}_+ \rightarrow {X^2_{\mathrm{sing}}}_+ \rightarrow X^2_{\mathrm{sing}}/X^1_{\mathrm{sing}}\xrightarrow{\alpha} \Sigma {X^1_{\mathrm{sing}}}_+.  \]
The spectrum $ Y^2$ is not of form $\Sigma^{\infty}Z_{+}$ for any proper $\Gamma$-CW-complex $Z$ because $\rho$ contains transfer maps and therefore only exists in the stable world. Indeed, there do not exist $\Gamma$-maps from the $\Gamma$-sets $\Gamma/C_2$ and $\Gamma/C_3$ to the $\Gamma$-set $\Gamma/e$.

\end{document}